\def\bm{\boldsymbol}
\def\mad{\textrm{mad}}
\def\diam{\textrm{diam}}
\def\ch{{\mathrm{ch}}}
\def\G{\mathcal{G}}
\def\R{\mathcal{R}}
\def\uphr{\upharpoonright}
\newtheorem{lem}{Lemma}
\newtheorem{thm}{Theorem}
\newtheorem{thmA}{Theorem}
\newtheorem*{mad-lem}{Mad Lemma}
\newtheorem*{keylem}{Key Lemma}
\newcommand{\aside}[1]{\marginnote{\scriptsize{#1}}[0cm]}
\newcommand{\aaside}[2]{\marginnote{\scriptsize{#1}}[#2]}
\newcommand\Emph[1]{\emph{#1}\aside{#1}}
\newcommand\EmphE[2]{\emph{#1}\aaside{#1}{#2}}
\author{Daniel W. Cranston\thanks{%
Department of Computer Science, Virginia Commonwealth
University, Richmond, VA, USA;
\texttt{dcranston@vcu.edu}
}}
\begin{document}
\title{List-Recoloring of Sparse Graphs}
\maketitle
\abstract{
Fix a graph $G$, a list-assignment $L$ for $G$, and $L$-colorings $\alpha$ and
$\beta$.  
An $L$-recoloring sequence,
starting from $\alpha$, 
recolors a
single vertex at each step, so
that each resulting intermediate coloring is a proper $L$-coloring.  An
$L$-recoloring sequence transforms $\alpha$ to $\beta$ if its initial coloring
is $\alpha$ and its final coloring is $\beta$.  
We prove there exists an $L$-recoloring sequence that transforms
$\alpha$ to $\beta$ and recolors each vertex at most a constant number of times
if (i) $G$ is triangle-free and planar and $L$ is a 7-assignment, or (ii)
$\mad(G)<17/5$ and $L$ is a 6-assignment or (iii) $\mad(G)<22/9$ and $L$ is a
4-assignment.  Parts (i) and (ii) confirm conjectures of Dvo\v{r}\'{a}k and Feghali.
}

\section{Introduction}
A proper $k$-coloring of a graph $G$ assigns each vertex $v$ of $G$ a
``color'' from $\{1,\ldots,k\}$ so that the endpoints of each edge get distinct
colors.  A \emph{recoloring step} changes the color of a single vertex, so that
the resulting coloring is also proper.  Given two proper $k$-colorings of $G$,
say $\alpha$ and $\beta$, we want to know if we can transform $\alpha$ to $\beta$ 
by a sequence of recoloring steps, again requiring that each intermediate coloring
is a proper $k$-coloring.  If we can, then we might ask for the ``distance''
from $\alpha$ to $\beta$, the length of a shortest sequence of recoloring steps
that transforms $\alpha$ to $\beta$?  And, more generally, what is the maximum
distance between any pair $\alpha$ and $\beta$?
Such questions have been studied extensively, under the name Glauber dynamics, due to
their applications in statistical physics; for example, see~\cite{FV-survey,
SODA-best} and their
references.

Before going further, we will rephrase the problem in a bit more generality.
For brevity, we defer many well-known definitions to the end of
Section~\ref{prelims-sec}.  Rather than considering $k$-colorings of $G$, we fix a
list-assignment $L$ such that $|L(v)|=k$ for all vertices $v$, and we consider
$L$-colorings.  Given a proper $L$-coloring of $G$, an \emph{$L$-recoloring step}
changes the color of a single vertex, so that the resulting coloring is also a
proper $L$-coloring.  Now we ask the same questions as above.  

Given $L$-colorings $\alpha$ and
$\beta$, can we transform $\alpha$ to $\beta$ by a sequence of $L$-recoloring steps,
requiring that each intermediate coloring is a proper $L$-coloring?  If so, what
is the length of a shortest such recoloring sequence, the ``distance'' from
$\alpha$ to $\beta$?  And what is the maximum distance over all pairs $\alpha$
and $\beta$?  Formally, the \emph{$L$-recoloring graph of $G$}, denoted
$\R_L(G)$, has as its vertices all $L$-colorings of $G$, and two vertices of
the $L$-recoloring graph are adjacent if the $L$-colorings differ on a single
vertex of $G$.  That is, what is the diameter of $\R_L(G)$?
We can also go further and ask about the maximum over all
$k$-assignments $L$.  

Intuitively, all pairs of $L$-colorings should be ``near'' to each other when
$k$ is sufficiently large, relative to $G$.  But what does near mean?  
Rather than focusing on a single graph $G$, we typically consider families $\G$
of graphs $G$ and study the maximum of $\diam(\R_L(G))$ over all $n$-vertex
graphs $G\in \G$ and all $k$-assignments $L$ for $G$.  Specifically, we study how
this maximum grows with $n$.
It is very possible that
not a single vertex receives the same color under $\alpha$ and $\beta$, i.e.,
$\alpha(v)\ne \beta(v)$ for all vertices $v$.  In that case, clearly
$\diam(R_L(G))\ge n$.  When $G$ is sufficiently sparse, relative to $k$, we can
often show that $\diam(R_L(G))=O(n)$.

As an easy example, suppose that $k\ge 2\Delta(G)+1$.  One by one, we recolor
each vertex $v$ with a color that is not currently used on its open
neighborhood, and also not used on its open neighborhood under $\beta$.  After this,
we can simply recolor each vertex $v$ with $\beta(v)$.  This shows that 
$\diam(\R_L(G))\le 2n$.   However, the requirement $|L(v)|\ge 2\Delta(G)+1$ is a
very strong hypothesis.  For sparser families of graphs, often much weaker
hypotheses on $L$ suffice to guarantee that $\diam(\R_L(G))=O(n)$.  In this
paper, we focus on planar graphs with sufficiently large girth, as well as the
more general class of graphs $G$ with bounded maximum average degree, which we
denote by $\mad(G)$.  An \emph{$L$-recoloring sequence} is simply a sequence of
$L$-recoloring steps, starting from some specified $L$-coloring.  
We prove the following three main results.

\begin{thmA}
Let $G$ be a planar triangle-free graph.  Fix a
7-assignment $L$ for $G$ and $L$-colorings $\alpha$ and $\beta$.  There exists
an $L$-recoloring sequence that transforms $\alpha$ into $\beta$ such that each
vertex is recolored at most 30 times.
\label{thm1A}
\end{thmA}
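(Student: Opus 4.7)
The plan is to argue by strong induction on $|V(G)|$, with the structural ingredient supplied by a discharging-based reducible configuration lemma. The base case (a single vertex, or more generally a graph where $\alpha=\beta$) is trivial, so I focus on the inductive step: find a small reducible subgraph $H\subseteq G$, apply induction to $G':=G-V(H)$, and extend the resulting recoloring sequence back to $G$.

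The first step is the structural lemma. Since $G$ is triangle-free and planar, Euler's formula gives $\mad(G)<4$. A standard discharging argument, starting from charges $d(v)-4$ and redistributing along edges and faces, should yield that $G$ contains at least one of a short list of reducible configurations $H_1,\dots,H_t$. Each $H_i$ consists of a bounded number of low-degree vertices with a constrained attachment pattern to $G-V(H_i)$; natural candidates are a $2$-vertex, a $3$-vertex, two adjacent $4$-vertices with no common neighbor, or a $4$-vertex whose neighbors are all of small degree.

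Given such an $H\subseteq G$, I would set $G':=G-V(H)$ and invoke the inductive hypothesis on $(G',L\uphr G',\alpha\uphr G',\beta\uphr G')$ to obtain an $L$-recoloring sequence $\sigma'$ in $G'$ in which every vertex is recolored at most $30$ times. To lift $\sigma'$ to an $L$-recoloring sequence $\sigma$ on $G$, I proceed in three phases: (a) transform $\alpha\uphr V(H)$ into an intermediate coloring of $V(H)$ chosen to be compatible with the initial step of $\sigma'$; (b) execute the steps of $\sigma'$ one by one, inserting short bursts of recolorings of vertices in $V(H)$ whenever a step would create a conflict with a vertex of $H$; (c) transform the current coloring of $V(H)$ into $\beta\uphr V(H)$. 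Phases (a) and (c) cost $O(1)$ recolorings per vertex of $V(H)$ because $|V(H)|=O(1)$ and $|L(v)|=7$ always exceeds the degree of $v$ restricted to $H$. In phase (b), a conflicting $v\in V(H)$ can always be recolored because $|L(v)|=7$ is larger than the number of currently-used colors among $v$'s neighbors.

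The main obstacle, as always in recoloring results of this form, will be controlling the per-vertex recoloring count at the constant $30$. For vertices $u\in V(G')$ the extra cost beyond $\sigma'$ is only the initial and final transitions forced by phases (a) and (c), which affects only $V(G')$-neighbors of $V(H)$. The genuine difficulty is the vertices of $V(H)$: naively each $v\in V(H)$ could be recolored in phase (b) once per step of $\sigma'$ that modifies a $V(G')$-neighbor of $v$, which would be far too many. To avoid this, the configurations in the discharging lemma must be chosen so that each $v\in V(H)$ has a small, bounded number of external neighbors, and the extension in phase (b) must be amortized so that each $v\in V(H)$ is recolored only $O(1)$ times in total (for instance, by choosing $v$'s color in phase (a) to be one that remains legal throughout most of $\sigma'$, using the slack $|L(v)|-|N_{G'}(v)|$). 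Summing the $O(1)$ contributions per reduction across all cases in the discharging lemma should yield the absolute bound $30$.
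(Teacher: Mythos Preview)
Your framework matches the paper's: induction on $|V(G)|$, a discharging lemma producing unavoidable configurations, and extension via what the paper calls the Key Lemma (your phases (a)--(c)). The gap is that the configurations you propose do not actually achieve the bound $30$, and the ones that do are considerably more delicate than you suggest.

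Concretely: a lone $3$-vertex $v$ is \emph{not} $30$-reducible. With $|L(v)|=7$ and $d_G(v)=3$ the slack is $|L(v)|-d(v)-1=3$, and the three neighbors may each be recolored $30$ times in $\sigma'$, so the Key Lemma only yields $\lceil 90/3\rceil+1=31$ recolorings of $v$. Your ``two adjacent $4$-vertices'' fares worse: adding one back leaves it with three outside neighbors and slack $3$, giving $31$; adding the second back then has four neighbors recolored up to $30+30+30+31=121$ times with slack $2$, giving $\lceil 121/2\rceil+1=62$. So neither candidate works, and there is no amortization trick here that rescues a bare $3$-vertex, because each of its neighbors genuinely may need $30$ recolorings.

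What the paper does instead is find configurations containing a vertex that, after deleting the rest of the configuration, has degree at most $2$ in $G'$ (slack $\ge 4$), so it is recolored at most $16$ times; this saving then cascades outward to keep every deleted $3$-vertex at or below $30$. The actual unavoidable list (for $\delta(G)\ge 3$) is: (a) a $5$-vertex with three $3$-neighbors; (b) a path with $3$-vertex endpoints and at most three internal $4$-vertices; (c) a $4$-face with degree sequence $(3,4,4,4)$; (d) a $4$-face with degree sequence $(3,4,5,4)$ or $(3,4,4,5)$ whose $5$-vertex has an extra $3$-neighbor off the face. Proving this list is unavoidable uses face charges ($2d(v)-6$ on vertices, $\ell(f)-6$ on faces) and genuinely exploits planarity, not merely $\mad(G)<4$; your sketch with vertex charges $d(v)-4$ would not detect configurations (c) and (d). The reducibility arithmetic then runs, e.g.\ in (b), as $16\to 27\to 30$ along the path, landing exactly on the bound.
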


\begin{thmA}
Fix a graph $G$ with $\mad(G)<17/5$, a 
6-assignment $L$ for $G$, and $L$-colorings $\alpha$ and $\beta$.  There exists
an $L$-recoloring sequence that transforms $\alpha$ into $\beta$ such that each
vertex is recolored at most 12 times.  In particular, this holds for every
planar graph of girth at least 5.
\label{thm2A}
\end{thmA}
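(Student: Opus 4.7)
The plan is to argue by strong induction on $|V(G)|$, combining a discharging argument that isolates a small reducible configuration $H \subseteq G$ with an extension step that absorbs $H$ into a recoloring sequence produced inductively for $G' := G - V(H)$. Since $\mad$ is monotone under induced subgraphs and $L$ restricts to $G'$, the inductive hypothesis applies, yielding a sequence $\sigma'$ transforming $\alpha|_{G'}$ to $\beta|_{G'}$ in which every vertex of $G'$ is recolored at most $12$ times.

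For the discharging step, I would set initial charge $\mu(v) := d(v) - 17/5$, giving $\sum_v \mu(v) < 0$, and design rules under which every vertex retains nonnegative final charge unless $G$ contains one of a short list of local configurations---typically a vertex of degree at most $2$, two adjacent $3$-vertices, or a short thread-like cluster of $3$-vertices. The configuration list is chosen in tandem with the extension step, so that each candidate $H$ is simultaneously forced by $\mad(G) < 17/5$ and amenable to a bounded-cost extension.

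The main obstacle is the extension. A vertex $v \in H$ with neighbors in $G'$ could a priori be forced to re-dodge every color change on $N_{G'}(v)$ during $\sigma'$, so the recoloring count at $v$ threatens to scale with $|\sigma'|$ rather than with an absolute constant. To beat this, I would exploit the slack $6 - d_{G'}(v)$ in the list assignment, which is at least $3$ or $4$ for every $v$ in each candidate configuration. The concrete plan is to build $\sigma$ in three phases: a preparation phase that moves each vertex of $H$ to a carefully chosen ``parking'' color avoiding the colors used on $N_{G'}(v)$ throughout $\sigma'$ (both in $\alpha$ and $\beta$); the sequence $\sigma'$ itself, executed on $G'$ with $H$ held fixed; and a finalization phase that transforms $H$ from its parking coloring to $\beta|_H$ using its local structure (e.g., leaves first in a thread of $3$-vertices). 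For this to succeed, the inductive statement may need to be strengthened to allow constraints on the boundary behavior of $\sigma'$ near $H$ (for instance, that certain colors never appear on particular vertices adjacent to $H$), so that the parking colors remain valid throughout $\sigma'$.

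What remains is bookkeeping: verify that the three phase costs sum to at most $12$ recolorings at each vertex of $H$, that vertices of $G'$ gain no additional recolorings, and that the configuration list really is unavoidable under $\mad(G) < 17/5$. I expect the finalization phase to be the binding constraint, since reinstating $\beta$ on a cluster of adjacent low-degree vertices may cascade through the cluster; choosing the reducible configurations to be small enough to limit this cascade, yet broad enough to keep the discharging argument valid, is where most of the combinatorial work will live.
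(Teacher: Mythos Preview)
Your high-level architecture (induction, discharging to find an unavoidable configuration, then reduce) matches the paper, and your configuration list is in the right ballpark: the paper shows that $\mad(G)<17/5$ forces a $2^-$-vertex, a 3-vertex with two 3-neighbors, or a 4-vertex with four 3-neighbors. The discharging is a single easy rule.

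The genuine gap is your extension mechanism. Your plan is to ``park'' each $v\in H$ at a fixed color that avoids every color appearing on $N_{G'}(v)$ at any point during $\sigma'$, then run $\sigma'$ with $H$ frozen. But a neighbor $x\in G'$ may be recolored up to 12 times, and across those steps it can visit every color of $L(x)$; if $L(v)\subseteq L(x)$ (or if $v$ has two such neighbors), no parking color exists. You acknowledge this by proposing to strengthen the inductive hypothesis so that $\sigma'$ avoids certain colors on boundary vertices, but this is not a small bookkeeping matter: each boundary vertex of $G'$ already has degree up to~5 inside $G'$, so forbidding even one more color per step can destroy the extension of $\sigma'$ itself, and the constraint propagates through further levels of the induction. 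The paper does not (and, as far as I can see, cannot easily) make this work.

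What the paper actually does is the opposite of parking: it \emph{interleaves} recolorings of $H$ with $\sigma'$. This is the Key Lemma idea, sharpened. For a deleted vertex $w$ with two neighbors in $G'$, one watches the stream of colors $a_1,a_2,\ldots$ that $\sigma'$ places on those neighbors and, just before $a_1$ arrives, recolors $w$ to avoid $\{a_1,a_2,a_3\}$ and the two current neighbor colors; since $|L(w)|=6$, this succeeds. One repeats before $a_4$, etc., so $w$ is recolored roughly once per three neighbor-recolorings rather than once per neighbor-recoloring. The subtlety is that recoloring $w$ may in turn force a recoloring of the central vertex $v$ of the configuration; the paper's argument carefully arranges that these forced recolorings of $v$ happen only a bounded number of times near the start and end, and that they do not ``lose ground'' against the stream of colors on $v$'s own outside neighbor. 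The resulting count is exactly~12 at both $v$ and each $w_i$, which is why the configurations must be as small as they are. Your three-phase plan does not capture this interleaving, and without it the bound cannot be met.
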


\begin{thmA}
Fix a graph $G$ with $\mad(G)<22/9$, a 
4-assignment $L$ for $G$, and $L$-colorings $\alpha$ and $\beta$.  There exists
an $L$-recoloring sequence that transforms $\alpha$ into $\beta$ such that each
vertex is recolored at most 14 times. In particular, this holds for every planar
graph of girth at least 11.
\label{thm3A}
\end{thmA}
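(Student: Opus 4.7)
The plan is to prove Theorem~\ref{thm3A} by induction on $|V(G)|$ through a strengthened ``Key Lemma,'' paralleling the strategy used for Theorems~\ref{thm1A} and~\ref{thm2A}. The Key Lemma should allow a bounded set of vertices to carry either pre-committed colors or lists shortened by one color, so that recursive deletions can be absorbed cleanly, while still yielding a per-vertex recoloring bound of at most $14$ outside of the pre-committed set.

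The first step is a discharging argument on a minimum counterexample. Assign each vertex $v$ initial charge $d(v)-22/9$; the hypothesis $\mad(G)<22/9$ forces the total charge to be strictly negative. Since degree-$1$ and degree-$2$ vertices carry charges $-13/9$ and $-4/9$ while vertices of degree $d\ge 3$ carry charge $\ge 5/9$, I expect to set up rules that send a fractional amount of charge from each $d$-vertex ($d\ge 3$) to each adjacent $1$- or $2$-vertex, chosen so that the only way to avoid a contradiction is the presence of one of a short list of unavoidable configurations. A plausible list is: (C1) a degree-$1$ vertex; (C2) two adjacent degree-$2$ vertices; and (C3) a $3$-vertex adjacent to enough $2$-vertices to drain its charge, with the exact threshold pinned down by the rules.

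For each reducible configuration $H$, the reduction has the same shape: delete a carefully chosen $S\subseteq V(H)$ to obtain $G':=G-S$, which still satisfies $\mad(G')\le\mad(G)<22/9$; choose a $4$-assignment $L'$ on $V(G')$ that may forbid at most one color on each neighbor of $S$; invoke the inductive Key Lemma to obtain a recoloring sequence $\sigma'$ on $G'$; and lift $\sigma'$ to a sequence $\sigma$ on $G$ by a ``pre-color then restore'' maneuver. Concretely, we perform a few initial recolorings to bring each $v\in S$ to $\beta(v)$ (possibly after first perturbing one neighbor to free up $\beta(v)$), then freeze $v$ throughout $\sigma'$---the freezing being precisely what forces $L'$ to lose the color $\beta(v)$ on the relevant neighbors of $v$---and finally perform a bounded cleanup phase. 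Each $v\in S$ is thus recolored only $O(1)$ times, and each vertex of $V(G')$ near $S$ absorbs only a bounded additive overhead.

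The main technical obstacle is that $|L(v)|=4$ leaves very little slack: a deleted vertex of degree $3$ has exactly one color free, so freeing $\beta(v)$ on its neighbors may require a small cascade of preparatory recolorings. Ensuring that no vertex accumulates more than $14$ recolorings across the whole induction is the bookkeeping heart of the argument; I expect it will be necessary to delete entire paths of $2$-vertices as a single block and recolor them in a coordinated sweep, and the discharging rules must be tuned to forbid any configuration in which several deletions would converge on a single surviving vertex. Theorem~\ref{thm3A} then follows from the Key Lemma applied with empty precolored set, and the girth-$11$ corollary is immediate from the standard Euler-formula bound $\mad(G)<2g/(g-2)$ for planar graphs of girth $g$.
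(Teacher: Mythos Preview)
Your discharging sketch is fine as far as it goes, but the configuration list (C1)--(C3) is too coarse to support the bound $14$, and this is where the plan breaks. Take (C2), a pair of adjacent $2$-vertices on a path $v_1v_2v_3v_4$. Deleting $\{v_2,v_3\}$ and applying the Key Lemma to reinsert them gives $v_3$ at most $\lceil 14/2\rceil+1=8$ recolorings, but then $v_2$ sees $14+8=22$ recolorings on its two neighbors and the Key Lemma yields only $\lceil 22/1\rceil+1=23$. The same arithmetic kills (C3): if $v$ is a $3$-vertex with three $2$-neighbors $w_i$, you can hold $v$ to about $4$ recolorings by treating it as the center of a $3$-thread, but each $w_i$ then sees $4+14=18$ recolorings on its neighbors and the Key Lemma gives $19$. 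Your ``freeze at $\beta(v)$ and shorten neighboring lists'' idea does not rescue this: once some vertices carry $3$-lists the induction hypothesis no longer applies, and you have not stated (let alone proved) a strengthened hypothesis that controls how many short-list vertices are allowed, where they may sit, and what recoloring bound they obey.

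The paper avoids all of this by working with a finer unavoidable set. It allows $2$-threads but forbids $3$-threads (Lemma~\ref{lem2}), and then classifies $3$-vertices by the lengths of their incident maximal threads, writing $3_{a,b,c}$. The reducible configurations are not ``a $3$-vertex with many $2$-neighbors'' but statements like ``no $3_{2,1,0}$-vertex is adjacent to a $3_{1,1,0}$-, $3_{2,0,0}$-, or $3_{2,1,0}$-vertex'' and ``no $3_{1,1,1}$-vertex has a weak $3_{1,1,1}$- or $3_{2,1,0}$-neighbor'' (Lemmas~\ref{lem5}--\ref{lem6}). These are reduced not by freezing and list-shortening but by tailored extensions in the spirit of the Key Lemma (Lemmas~\ref{lem1}--\ref{lem3}) that track exactly how many recolorings propagate along a thread; the discharging then sends $2/9$ from each $3^+$-vertex to each nearby $2$-vertex and small corrections of $1/9$ or $1/18$ between $3$-vertices and their (weak) $3$-neighbors. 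The bookkeeping you flag as ``the heart of the argument'' is precisely this thread-by-thread propagation analysis, and your proposal does not yet contain it.
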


Theorems~\ref{thm1A} and~\ref{thm2A} confirm conjectures of Dvo\v{r}\'{a}k and
Feghali~\cite{DF2}.  Before discussing the proofs of
Theorems~\ref{thm1A}--\ref{thm3A}, we say more about the history of recoloring
and related problems.

It is very natural to want to transform one instance of something to another
instance, by a sequence of small steps, maintaining a valid instance at each
step.  We can apply this paradigm to
colorings, dominating sets, independent sets, perfect matchings, spanning
trees, or solutions to a
SAT problem, to name a few.  Once we have specified the type of instance we
want, then we must specify the allowable modifications to move from one instance
to the next.  For SAT problems, we typically restrict to flipping the true/false
value of a single variable.  

But for some problems, the ``best'' choice of
allowable modification is less clear.  When moving from one (list-)coloring to the
next, we might restrict to recoloring a single vertex; or we might allow the
more powerful notion of a Kempe swap.  When moving from one independent set $I$
to another, we might allow replacing any vertex in $I$ with any vertex not in
$I$.  Or we might require that the replaced vertex be adjacent to the vertex
replacing it (so-called, token sliding).  All of these choices give rise to
interesting problems.  And this general area of study is known as
\emph{reconfiguration}.  An early reference on this topic is~\cite{vdH-survey}.
For a more recent survey, see~\cite{nishimura}.

Much work has focused on coloring reconfiguration.  Recently, \cite{BBFHMP}
completed the characterization of pairs $(g,k)$ such that for all planar graphs
$G$ of girth at least $g$, every $k$-coloring of $G$ can be transformed to
every other.  Similar problems have been studied for graphs
of bounded maximum average degree~\cite{feghali-JCTB}, graphs of bounded
treewidth~\cite{BB}, and particularly $d$-degenerate graphs~\cite{CvdHJ1}.  Now
much emphasis has shifted to bounding the diameter of these recoloring
graphs~\cite{cereceda-diss,BH,BP,feghali-short}.

The present paper is motivated by work of Dvo\v{r}\'{a}k and Feghali~\cite{DF1,DF2}.  
They showed that if an $n$-vertex graph $G$ is planar, then any 10-coloring can be
transformed to any other and, more strongly, that the corresponding recoloring
graph has diameter at most $8n$.  They proved an analagous result for 7-colorings
of planar triangle-free graphs.  Further, they conjectured that the same result
holds more generally for $L$-colorings from an arbitrary list-assignment $L$
where, for all $v$, we have $|L(v)|\ge 7$.
Theorem~\ref{thm1A} confirms this conjecture.  They also conjectured an
analagous result for 6-colorings of planar graphs with girth at least 5.
Theorem~\ref{thm2A} confirms this latter conjecture.  Bonamy et al.~\cite{BJLPP}
showed that the 3-recoloring graph of the $n$-vertex path $P_n$ has diameter
$\Theta(n^2)$.  Thus, if we are trying to prove linear upper bounds on the
distance between any two $L$-colorings of $G$, we typically consider
list-assignments $L$ where always $|L(v)|\ge 4$.  Thus, Theorem~\ref{thm3A} is the
natural continuation of this line of study to sparser graphs.

\section{Preliminaries}
\label{prelims-sec}

An \Emph{$L$-recoloring sequence} consists of an $L$-coloring and a sequence
of $L$-recoloring steps, so that each later coloring is a proper $L$-coloring.
An $L$-recoloring sequence is \EmphE{$k$-good}{4mm} if each vertex is
recolored at most $k$ times.
An $L$-coloring $\alpha$ of $G$ restricted to a subgraph $G'$ is denoted
\EmphE{$\alpha_{\uphr G'}$}{4mm}.
We will often \EmphE{extend}{4mm} an $L$-recoloring sequence $\sigma'$ that
transforms $\alpha_{\uphr G'}$ to $\beta_{\uphr G'}$ to an $L$-recoloring
sequence $\sigma$ that transforms $\alpha$ to $\beta$.  This means that 
restricting $\sigma$ to $V(G')$ results in $\sigma'$.

The following Key Lemma is simple but powerful.  It has been used implicitly in many
papers, and first appeared explicitly in~\cite{BBFHMP}.  We phrase it in the
slightly more general language of list-coloring, although the proof is
identical.  In fact, the reader familiar with correspondence coloring will note
that the proof works equally well in that, still more general, context, too.
The same is true of all the proofs in this paper.

\begin{keylem}
Fix a graph $G$, a list-assignment $L$ for $G$, and $L$-colorings $\alpha$
and $\beta$.  Fix a vertex $v$ 
with $|L(v)|\ge d(v)+1$ 
and let $G':=G-v$.  Fix an $L$-recoloring
sequence $\sigma'$ for $G'$ transforming $\alpha_{\uphr G'}$ to $\beta_{\uphr
G'}$.  If $\sigma'$ recolors $N_G(v)$  a total of $t$ times, then we can extend
$\sigma'$ to an $L$-recoloring sequence for $G$ transforming $\alpha$ into
$\beta$ and recoloring $v$ at most $\lceil t/(|L(v)|-d_G(v)-1)\rceil+1$ times.
\end{keylem}

\begin{proof}
Let $c_1,c_2,\ldots$ denote the sequence of colors used by $\sigma'$ to recolor
vertices in $N(v)$, in order and possibly with repetition.  Let $s:=|L(v)|-d_G(v)-1$.
Immediately before $\sigma'$ is to recolor a vertex in $N(v)$ with $c_1$, we recolor $v$
with a color distinct from $c_1,\ldots,c_s$ and also
distinct from all colors currently used on $N(v)$.  This is possible
precisely because $|L(v)|=s+d_G(v)+1$. Immediately before $\sigma$ is to recolor a vertex in 
$N(v)$ with $c_{s+1}$, we recolor $v$ with a color distinct from
$c_{s+1},\ldots,c_{2s}$ (and those currently used on $N(v)$).  Continuing in
this way, we extend $\sigma'$ to an $L$-recoloring
sequence that recolors $v$ at most $\lceil t/s\rceil$ times and transforms
$\alpha$ to a coloring $\beta'$ that agrees with $\beta$ everywhere except
possibly on $v$.  Finally, if needed, we recolor $v$ with $\beta(v)$.
So $v$ is recolored at most $\lceil t/s\rceil+1$ times.
\end{proof}

For easy reference we include the following lemma, which is folklore.  

\begin{mad-lem}
\label{mad:lem}
If $G$ is a planar graph with girth at least $g$, then $\mad(G)<2g/(g-2)$.
\end{mad-lem}

For completeness we conclude this short section with some standard
definitions.  We denote the degree of a vertex $v$ by $d(v)$.  For a planar
graph $G$ and a face $f$ in a plane embedding of $G$, we denote the length of
$f$ by $\ell(f)$.
A $k$-vertex is a vertex of degree $k$.  A $k^+$-vertex
(resp.~$k^-$-vertex) is one of degree at least (resp.~at most)
$k$.\aaside{$k$/$k^+$/$k^-$-vertex}{-8mm}
A \EmphE{$k$/$k^+$/$k^-$-neighbor}{-4mm} of a given vertex is an adjacent
$k/k^+/k^-$-vertex.
A \EmphE{list-assignment}{0mm} $L$ for a graph $G$ gives each vertex $v$
a list $L(v)$ of allowable colors.  If $|L(v)|=k$ for all $v$, then $L$ is a
\Emph{$k$-assignment}.  An \EmphE{$L$-coloring}{4mm} is a proper coloring $\alpha$
such that $\alpha(v)\in L(v)$ for all $v$.  The \emph{maximum average degree} of
a graph $G$ is the maximum, over all nonempty subgraphs $H$ of $G$, of the
average degree of $H$; it is denoted \Emph{$\mad(G)$}.  That is,
$\mad(G):=\max_{H\subseteq G}2|E(H)|/|V(H)|$.

\section{Triangle-free Planar Graphs: Lists of Size 7}
\label{7list-sec}
In this section we prove Theorem~\ref{thm1A}.  To simplify the proof, it is
convenient to extract the following structural lemma about triangle-free planar
graphs.

\begin{lem}
\label{girth4-lem}
If $G$ is triangle-free and planar and $\delta(G)\ge 3$, then $G$ contains
one of the following four configurations: 
\begin{enumerate}
\item[(a)] a 5-vertex with at least three 3-neighbors, 
\item[(b)] a path with each endpoint of
degree 3 and at most three internal vertices, all of degree 4, 
\item[(c)] a 4-face with an incident 3-vertex and three incident 4-vertices, or 
\item[(d)] a 4-face $f$ with degree sequence (3,4,5,4) or (3,4,4,5) such that
the 5-vertex on $f$ is also adjacent to a 3-vertex not on $f$.
\end{enumerate}
\end{lem}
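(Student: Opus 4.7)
The plan is to prove the lemma by discharging. Assume for contradiction that $G$ is triangle-free, planar, and satisfies $\delta(G)\ge 3$, yet contains none of the configurations (a)--(d). Since $G$ is triangle-free and planar, $|E(G)|\le 2|V(G)|-4$, and Euler's formula yields
\[
\sum_{v\in V(G)}(d(v)-4)\;+\;\sum_{f\in F(G)}(d(f)-4)\;=\;-8.
\]
Assign each vertex $v$ initial charge $\mu(v):=d(v)-4$ and each face $f$ initial charge $\mu(f):=d(f)-4$. Only $3$-vertices are in deficit (each with charge $-1$); $4$-vertices and $4$-faces are neutral; every $5^+$-vertex and $5^+$-face has strictly positive surplus. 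I would then design discharging rules that preserve the total charge, and argue that each entity ends nonnegative, contradicting the total $-8$.

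I would use two types of rules. For \emph{vertex-to-vertex transfer}, each $3$-vertex $u$ draws a fixed fraction (around $\tfrac13$) of charge across each of its incident edges: a $5^+$-vertex neighbor pays immediately, while a $4$-vertex neighbor relays the request through the $4$-vertex chain until it reaches a $5^+$-vertex. Hypothesis (b) keeps these relay chains short and, crucially, prevents a single $5^+$-vertex from being asked to support too many distinct $3$-vertices through $4$-chains, since $3$-vertices connected by short $4$-vertex paths are forbidden. Hypothesis (a) then ensures that a $5$-vertex (surplus only $1$) has at most two $3$-neighbors and can comfortably cover its obligation; higher-degree vertices have proportionally more surplus and are unproblematic.

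For \emph{face-to-vertex transfer}, each $5^+$-face gives a small bounded amount to each incident $3$-vertex; this is easily absorbed because a $5^+$-face has charge at least $1$ and only a bounded density of $3$-vertices on its boundary. The delicate case is a $3$-vertex on a $4$-face, which gets nothing from the face itself. Hypothesis (c) forces any $4$-face carrying a $3$-vertex to also carry a $5^+$-vertex on its boundary (two $3$-vertices on a common $4$-face can occur only at opposite corners by triangle-freeness), and that $5^+$-vertex absorbs the ``missing'' face contribution. Hypothesis (d) handles the borderline case where the only $5^+$-vertex on the $4$-face is a single $5$-vertex with surplus $1$: it guarantees that this $5$-vertex has no $3$-neighbor off the face, so its surplus is not already spent elsewhere and remains available for its on-face $3$-neighbor.

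Finally, I would verify by a short case analysis on the local structure at each $5^+$-vertex and each $5^+$-face that, under these rules, every vertex and face ends nonnegative, producing the contradiction. The main obstacle is calibrating the precise transfer amounts so that (a)--(d) collectively suffice to balance the books: I expect configuration (d) to drive the tight case, since it isolates a $5$-vertex that is barely enough to support its incident $3$-vertex, and the rule design must ensure that exactly this level of support, and no more, is demanded of it.
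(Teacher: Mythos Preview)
Your plan to use discharging is correct in spirit, and the initial charges $d(v)-4$, $d(f)-4$ are a legitimate starting point. But the scheme you sketch has a real gap, and it is also quite different from what the paper actually does.

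The gap is in your vertex-to-vertex relay. You assert that hypothesis~(b) ``prevents a single $5^+$-vertex from being asked to support too many distinct $3$-vertices through $4$-chains.'' That is not what (b) gives you. Hypothesis~(b) forbids short paths \emph{between two $3$-vertices} whose interior vertices are all of degree~$4$; it says nothing about paths from a $3$-vertex through $4$-vertices to a $5$-vertex. Concretely, take a $5$-vertex $v$ with two $3$-neighbors (permitted by the absence of~(a)) and three $4$-neighbors $w_1,w_2,w_3$, each $w_i$ having a $3$-neighbor $x_i$. Since $G$ is triangle-free, no two neighbors of $v$ are adjacent, so every short path between two of the $3$-vertices $u_1,u_2,x_1,x_2,x_3$ must pass through $v$, which has degree~$5$; hence (b) is not triggered, and neither is (a), (c), or (d). Under your relay rule each $w_i$ forwards $x_i$'s request to its $5^+$-neighbor $v$, so $v$ is asked for $\tfrac{5}{3}$ while its surplus is only $1$. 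Your face rule does not rescue this, since nothing forces $5^+$-faces here. A related problem: on a $4$-face with degree sequence $(3,4,5,4)$ the $5$-vertex is \emph{opposite} the $3$-vertex, not adjacent to it, so neither your vertex rule nor your face rule provides any channel for that $5$-vertex to ``absorb the missing face contribution''; you would need an extra rule you have not stated.

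The paper avoids all of this by discharging in the other direction: vertices send charge to \emph{faces}. With initial charges $2d(v)-6$ and $\ell(f)-6$, each $4$-vertex gives $\tfrac12$ to every incident face, each $6^+$-vertex gives $1$, and any face still in deficit takes the remainder equally from its incident $5$-vertices. All vertices except $5$-vertices are then trivially nonnegative, so the whole proof reduces to showing that no $5$-vertex gives away more than its surplus of~$4$. This is done by classifying the six types of $4$-faces that can take more than $\tfrac12$ from an incident $5$-vertex, and then using (a)--(d) to rule out bad combinations of such faces around a single $5$-vertex. In particular, (c) and (d) are used not to route charge across a $4$-face to a $3$-vertex, but to control which demanding $4$-faces can cluster at one $5$-vertex.
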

\begin{proof}
Assume the lemma is false, and let $G$ be a counterexample.
We use discharging, giving each vertex $v$ initial charge $2d(v)-6$ and each
face $f$ initial charge $\ell(f)-6$.  By Euler's formula, the sum of all
initial charges is $-12$.  We use the following three discharging rules.
\begin{enumerate}
\item [(R1)] Each 4-vertex gives 1/2 to each incident face.\footnote{In this rule
and those following, if a vertex $v$ is a cut-vertex and is incident to a face
$f$ at multiple points on a walk along the boundary of $f$, then $v$ gives $f$
charge for each incidence.}
\item [(R2)] Each $6^+$-vertex gives 1 to each incident face.
\item [(R3)] After receiving charge from (R1) and (R2), if any face $f$ still
needs charge, then it takes that charge equally from all incident 5-vertices.
\end{enumerate}
Now we show that each vertex and face ends with nonnegative charge.  This
contradicts that the sum of the initial charges is $-12$, and proves the lemma.
We write $\ch^*(v)$ to denote the final charge of $v$, that is, the charge of $v$
after applying each of (R1)--(R3).

By assumption, $G$ has no $2^-$-vertices.  Each 3-vertex starts and ends with
charge 0.  Each 4-vertex $v$ has $\ch^*(v)= 2(4)-6-4(1/2)=0$.  And each
$6^+$-vertex $v$ has $\ch^*(v)\ge 2d(v)-6-d(v)\ge 0$, since $d(v)\ge 6$.
Clearly each $6^+$-face finishes with charge nonnegative.
By (R1), (R2), and (b), the same is true for each 5-face; and
by (R3), the same is true for each face with an incident 5-vertex.
So assume that $f$ is a 4-face with no incident 5-vertex.  By (R1), (R2), and
(b), it is easy to check that $f$ finishes with charge nonnegative; here we use
that (c) is forbidden.

Thus, what remains is to show that each 5-vertex finishes with charge
nonnegative.  First, we classify the faces (and degrees of their incident
vertices) that take more than $1/2$ from an incident 5-vertex.
Each $6^+$-face $f$ finishes with at least its initial charge $\ell(f)-6\ge 0$.
By (b), $G$ contains no adjacent 3-vertices, so each 5-face has at least 3
incident $4^+$-vertices, and thus takes at most $1/3$ from each incident
5-vertex.  So consider a 4-face $f$ with an incident 5-vertex.  If $f$ has at
least two incident $6^+$-vertices, then it receives at least 2 by (R2), so
takes no charge from incident 5-vertices.  If $f$ has exactly one incident
$6^+$-vertex and at most one incident 3-vertex, then $f$ takes at most $1/2$
from each incident 5-vertex.  Similarly, if $f$ has no incident 3-vertices,
then $f$ takes at most $1/2$ from each incident 5-vertex.  So assume that $f$
either has two incident 3-vertices or else has one incident 3-vertex and no
incident $6^+$-vertices.  By (b), if $f$ has two incident
3-vertices, then each other incident vertex is a $5^+$-vertex.

\begin{figure}[!ht]
\centering
\begin{tikzpicture}[thick, scale=.4725]
\tikzstyle{uStyle}=[shape = circle, minimum size = 4.5pt, inner sep = 0pt,
outer sep = 0pt, draw, fill=white, semithick]
\tikzstyle{lStyle}=[shape = circle, minimum size = 14.5pt, inner sep = 0pt,
outer sep = 0pt, draw=none, fill=none]
\tikzset{every node/.style=uStyle}
\def\off{.4}

\draw (0,0) node (A) {} -- (0,2) node (B) {} -- (2,2) node (C) {} -- (2,0) node
(D) {} -- (A);
\draw (A) ++ (-\off,-\off) node[lStyle] {\footnotesize{5}};
\draw (B) ++ (-\off,\off) node[lStyle] {\footnotesize{3}};
\draw (C) ++ (1.5*\off,1.2*\off) node[lStyle] {\footnotesize{$5^+$}};
\draw (D) ++ (\off,-\off) node[lStyle] {\footnotesize{3}};
\draw (1,1) node[lStyle] (E) {\footnotesize{1}};
\draw (A) ++ (.5*\off,.5*\off) edge [->] (E);

\begin{scope}[xshift=1.8in]
\draw (0,0) node (A) {} -- (0,2) node (B) {} -- (2,2) node (C) {} -- (2,0) node
(D) {} -- (A);
\draw (A) ++ (-\off,-\off) node[lStyle] {\footnotesize{5}};
\draw (B) ++ (-\off,\off) node[lStyle] {\footnotesize{4}};
\draw (C) ++ (\off,\off) node[lStyle] {\footnotesize{3}};
\draw (D) ++ (\off,-\off) node[lStyle] {\footnotesize{4}};
\draw (1,1) node[lStyle] (E) {\footnotesize{1}};
\draw (A) ++ (.5*\off,.5*\off) edge [->] (E);
\end{scope}

\begin{scope}[xshift=3.6in]
\draw (0,0) node (A) {} -- (0,2) node (B) {} -- (2,2) node (C) {} -- (2,0) node
(D) {} -- (A);
\draw (A) ++ (-\off,-\off) node[lStyle] {\footnotesize{5}};
\draw (B) ++ (-\off,\off) node[lStyle] {\footnotesize{4}};
\draw (C) ++ (\off,\off) node[lStyle] {\footnotesize{4}};
\draw (D) ++ (\off,-\off) node[lStyle] {\footnotesize{3}};
\draw (1,1) node[lStyle] (E) {\footnotesize{1}};
\draw (A) ++ (.5*\off,.5*\off) edge [->] (E);
\end{scope}

\begin{scope}[xshift=5.4in]
\draw (0,0) node (A) {} -- (0,2) node (B) {} -- (2,2) node (C) {} -- (2,0) node
(D) {} -- (A);
\draw (A) ++ (-\off,-\off) node[lStyle] {\footnotesize{5}};
\draw (B) ++ (-\off,\off) node[lStyle] {\footnotesize{4}};
\draw (C) ++ (\off,\off) node[lStyle] {\footnotesize{5}};
\draw (D) ++ (\off,-\off) node[lStyle] {\footnotesize{3}};
\draw (1,1) node[lStyle] (E) {\footnotesize{3/4}};
\draw (A) ++ (.5*\off,.5*\off) edge [->] (E);
\end{scope}

\begin{scope}[xshift=7.2in]
\draw (0,0) node (A) {} -- (0,2) node (B) {} -- (2,2) node (C) {} -- (2,0) node
(D) {} -- (A);
\draw (A) ++ (-\off,-\off) node[lStyle] {\footnotesize{5}};
\draw (B) ++ (-\off,\off) node[lStyle] {\footnotesize{5}};
\draw (C) ++ (\off,\off) node[lStyle] {\footnotesize{4}};
\draw (D) ++ (\off,-\off) node[lStyle] {\footnotesize{3}};
\draw (1,1) node[lStyle] (E) {\footnotesize{3/4}};
\draw (A) ++ (.5*\off,.5*\off) edge [->] (E);
\end{scope}

\begin{scope}[xshift=9.0in]
\draw (0,0) node (A) {} -- (0,2) node (B) {} -- (2,2) node (C) {} -- (2,0) node
(D) {} -- (A);
\draw (A) ++ (-\off,-\off) node[lStyle] {\footnotesize{5}};
\draw (B) ++ (-\off,\off) node[lStyle] {\footnotesize{5}};
\draw (C) ++ (\off,\off) node[lStyle] {\footnotesize{5}};
\draw (D) ++ (\off,-\off) node[lStyle] {\footnotesize{3}};
\draw (1,1) node[lStyle] (E) {\footnotesize{2/3}};
\draw (A) ++ (.5*\off,.5*\off) edge [->] (E);
\end{scope}

\end{tikzpicture}
\caption{The six types of 4-faces, up to rotation and reflection, that take
charge more than 1/2 from each
incident 5-vertex $v$. Type 1 is on the left, and the types increase from left to
right.  Along with each type, we indicate in the center of the 4-face $f$ the
amount of charge that $f$ takes from $v$.\label{needy4faces-fig} 
(Each integer near a vertex denotes the degree of that vertex.)
}
\end{figure}
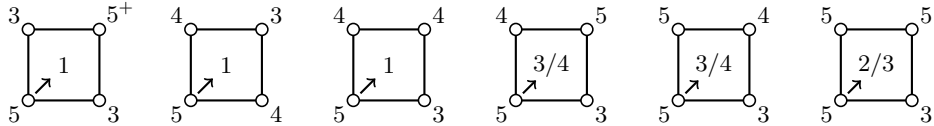

\aaside{types 1--6}{-2cm}
It is easy to check that if $f$ takes more than $1/2$ from an
incident 5-vertex, then $f$ has one of the six types shown in
Figure~\ref{needy4faces-fig}.
Further, if a 5-vertex $v$ is incident to at most one face of types 1--3, then
each other face incident to $v$ takes at most 3/4 from $v$,
so $\ch^*(v) \ge 2(5)-6-1-4(3/4)=0$.  Thus, below we assume that $v$
is incident to at least two faces of types 1--3.

Suppose that a 5-vertex $v$ is incident to a 4-face of type 1.  By (d), $v$ is
not incident to any 4-face of type 2 or type 3.  And by (a), $v$ is not incident
to another 4-face of type 1.  Thus, $v$ is incident with at most one face of
type 1--3, a contradiction.

Suppose that a 5-vertex $v$ is incident to a 4-face of type 2. By (d), $v$ is
not incident to any 4-face of type 3.  
So assume that $v$ is incident to (exactly) two 4-faces of type 2.  
By (b), these faces do not share a 4-vertex.
Let $f_1,\ldots,f_5$ denote the 5 faces incident to $v$ (with
multiplicity, if $v$ is a cut-vertex) in cyclic order.  By symmetry, we assume
that $f_1$ and $f_3$ are type 2.  However, now $f_2$ receives at most $1/2$ from
$v$.  Thus, $\ch^*(v)\ge 2(5)-6-2(1)-1/2-2(3/4)=0$.

Suppose that a 5-vertex $v$ is incident to a 4-face of type 3.  
In fact, $v$ must be incident to two such faces.
By (d), these type 3 faces must share a common 3-vertex.  So, by symmetry, we
assume that $f_2$ and $f_3$ are type 3, and they share a 3-vertex.  By (d), $v$
has no other 3-neighbor.  And by (b), each of $f_1$ and $f_4$ is either a
$5^+$-face or a 4-face with no incident 3-vertex.  Thus, $f_1$ and $f_4$
each take at most $1/2$ from $v$.  So $\ch^*(v)\ge 2(5)-6-2(1)-3/4-2(1/2)>0$.
\end{proof}

\begin{thm}
\label{thm1}
Let $G$ be a triangle-free planar graph, and let $L$ be a 7-assignment for $G$.
If $\alpha$ and $\beta$ are $L$-colorings of $G$, then $\alpha$ can be
transformed to $\beta$ by recoloring each vertex at most 30 times, so
that every intermediate coloring is a proper $L$-coloring.
\end{thm}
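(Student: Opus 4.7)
The plan is to induct on $|V(G)|$, the hypothesis being that every smaller triangle-free planar graph, every 7-assignment, and every pair of $L$-colorings admit an $L$-recoloring sequence that recolors each vertex at most $30$ times. The base case $|V(G)|\le 1$ is immediate. If $\delta(G)\le 2$, pick $v$ with $d_G(v)\le 2$, apply induction to $G-v$ (still triangle-free and planar), and invoke the Key Lemma: since $|L(v)|-d_G(v)-1\ge 4$ and $N_G(v)$ is recolored at most $2\cdot 30=60$ times, $v$ is recolored at most $\lceil 60/4\rceil+1=16$ times.

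If $\delta(G)\ge 3$, Lemma~\ref{girth4-lem} supplies one of the configurations (a)--(d). In each case I pick a small vertex set $S$ consisting of the vertices of the configuration (together with the external $3$-vertex $y$ in (d)), apply induction to $G_0:=G-S$ (again triangle-free and planar), and then restore the vertices of $S$ one at a time by iterated application of the Key Lemma. The crucial choice is the restoration order: I add each vertex when as few of its $G$-neighbors as possible have already been restored, so that its current degree $d$ stays low and $s:=|L(v)|-d-1$ stays large. Concretely: in (a) restore the $5$-vertex first (its three $3$-neighbors are absent, giving current degree $2$) and then the three $3$-neighbors; in (b), for a path $u_0u_1\cdots u_\ell$ with $1\le \ell\le 4$, restore a central internal $4$-vertex first and proceed outward, leaving the two $3$-endpoints for last; in (c) restore the $4$-vertex opposite the $3$-vertex first, then the remaining two $4$-vertices, then the $3$-vertex; in (d) restore the $5$-vertex first, then the two $4$-vertices on $f$, then the $3$-vertex on $f$, and finally $y$. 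Triangle-freeness rules out the stray chords that would otherwise inflate intermediate degrees: for example, two common $3$-neighbors of a vertex cannot be adjacent, and neither can two vertices at path-distance $2$ along the path in (b).

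A routine Key Lemma computation then verifies the $30$-good bound in every case. The tightest instance is configuration (b) with a length-$4$ path: restoring the central $4$-vertex first with $s=4$ and $t\le 60$ gives at most $16$ recolorings; each of the two adjacent internal $4$-vertices is then restored with $s=3$ and $t\le 16+60=76$, giving at most $27$; finally each $3$-endpoint is inserted with $s=3$ and $t\le 27+30+30=87$, giving exactly $\lceil 87/3\rceil+1=30$. The main obstacle is therefore this ordering step in (b): a naive order that inserts an internal $4$-vertex only after all four of its neighbors are already present would force $s=2$ and blow past the bound. With the orders above, the analogous calculations for (a), (c), and (d) are comfortably below $30$, and the induction goes through.
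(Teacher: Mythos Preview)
Your approach is the same as the paper's: induct on $|V(G)|$, strip a $2^-$-vertex if present, otherwise invoke Lemma~\ref{girth4-lem}, delete the configuration, and restore its vertices one by one via the Key Lemma in a carefully chosen order. Your orders and arithmetic for (a), (b), (c), and the $(3,4,5,4)$ sub-case of (d) agree with the paper's.

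There is a slip in the $(3,4,4,5)$ sub-case of (d). Here the $5$-vertex $v_4$ is \emph{adjacent} to the $3$-vertex $v_1$ on $f$, so the two $4$-vertices are $v_2$ (opposite $v_4$) and $v_3$ (adjacent to $v_4$). After you restore $v_4$ first (current degree $2$), your stated principle ``add each vertex when as few of its $G$-neighbors as possible have already been restored'' would pick $v_2$ next (two restored neighbors) before $v_3$ (three restored neighbors). But then $v_3$ is restored with all four of its neighbors present, giving $s=2$ and $t\le 16+16+60=92$, hence $\lceil 92/2\rceil+1=47$ recolorings---well over $30$. Even the correct order $v_4,v_3,v_2,v_1,y$ puts $v_2$ at exactly $30$, not ``comfortably below.'' The paper sidesteps this by deleting $y$ first, which drops the $5$-vertex to degree $4$ so that the face reduces to case (c); one then restores the vertex \emph{opposite} the $3$-vertex first (here the $4$-vertex $v_3$), getting every non-$3$-vertex on $f$ to at most $27$, and finally restores $y$ using that its face-neighbor was recolored at most $27$ times. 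Adopting that order---equivalently, always restoring the vertex opposite the $3$-vertex first, whether or not it is the $5$-vertex---closes the gap.
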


\begin{proof}
Suppose the theorem is false, and let $G$ be a smallest counterexample.  We
show that $\delta(G)\ge 3$, so Lemma~\ref{girth4-lem} applies.  We then show
that each of configurations (a)--(d) in Lemma~\ref{girth4-lem} is reducible for
the present theorem.  Thus, no counterexample exists, and the theorem is true.

Recall that an $L$-recoloring sequence is \emph{30-good} if it recolors each vertex at most
30 times.  If $G$ contains a $2^-$-vertex $v$, then by minimality $G-v$ has a
30-good recoloring sequence $\sigma'$ to transforms $\alpha_{\uphr G-v}$ to
$\beta_{\uphr G-v}$.  By the Key Lemma, we extend $\sigma'$ to a 30-good
recoloring sequence for $G$ that recolors $v$ at most $(30+30)/4+1=16$ times.
So $\delta(G)\ge 3$, and Lemma~\ref{girth4-lem} applies.

(a) Suppose $G$ contains a 5-vertex $v$ with neighbors $w_1,\ldots,w_5$
such that $d(w_1)=d(w_2)=d(w_3)=3$.  Let $G':=G-\{w_1,w_2,w_3\}$ and
$G'':=G'-v$.  By minimality, $G''$ has a 30-good recoloring sequence
$\sigma''$ transforming $\alpha_{\uphr G''}$ to $\beta_{\uphr G''}$.
By the Key Lemma, we can extend $\sigma''$ to a 30-good recoloring
sequence $\sigma'$ for $G'$ transforming $\alpha_{\uphr G'}$ to $\beta_{\uphr
G'}$ that recolors $v$ at most $(30+30)/4+1=16$ times.  By applying the Key Lemma
to each of $w_1$, $w_2$, $w_3$ (in succession), we can extend $\sigma'$ to a
30-good recoloring sequence $\sigma$ for $G$ that recolors each of $w_1$,
$w_2$, $w_3$ at most $\lceil (30+30+16)/3\rceil+1=27$ times.

(b) Suppose $G$ contains a path $v_1v_2v_3v_4v_5$ with $d(v_1)=d(v_5)=3$
and $d(v_2)=d(v_3)=d(v_4)=4$.  Let $G':=G-\{v_1,v_5\}$, let
$G'':=G'-\{v_2,v_4\}$, and let $G''':=G''-\{v_3\}$.  By minimality, $G'''$ has a
30-good $L$-recoloring sequence $\sigma'''$ transforming $\alpha_{\uphr
G'''}$ to $\beta_{\uphr G'''}$.  By the Key Lemma, we extend $\sigma'''$ to
a 30-good $L$-recoloring sequence $\sigma''$ for $G''$ transforming
$\alpha_{\uphr G''}$ to $\beta_{\uphr G''}$ that recolors $v_3$ at most
$(30+30)/4+1=16$ times.  By using the Key Lemma twice, we extend $\sigma''$
to a 30-good $L$-recoloring sequence $\sigma'$ for $G'$ transforming
$\alpha_{\uphr G'}$ to $\beta_{\uphr G'}$ that recolors each of $v_2$ and $v_4$
at most $\lceil(30+30+16)/3\rceil+1=27$ times.  Finally, by using the Key Lemma
twice more, we extend $\sigma'$ to a 30-good $L$-recoloring sequence $\sigma$
for $G$ transforming $\alpha$ to $\beta$; note that we recolor each of $v_1$
and $v_5$ at most $(30+30+27)/3+1=30$ times.
When the path has length $t$, for a $t\le 3$, the analysis is similar, but
ends after only extending $\sigma'''$ to $t+1$ vertices.

(c) Suppose $G$ contains a 4-face $f$ with degree sequence (3,4,4,4).
The proof is nearly identical to that of (b).  
Only now the
3-vertex is recolored at most $(30+27+27)/3+1=29$ times.

(d) Suppose $G$ contains a 4-face $f$ with degree sequence (3,4,5,4) or
(3,4,4,5) and the 5-vertex $v$ on $f$ is also adjacent to a 3-vertex $w$ not on $f$.
Form $G'$ from $G$ by deleting $w$, and form $G''$
from $G'$ by deleting the vertices of $f$.  As in (c) and (b), $G''$ has
a 30-good $L$-recoloring sequence, and we can extend it a 30-good $L$-recoloring
sequence for $G'$ that transforms $\alpha_{\uphr G'}$ to $\beta_{\uphr G'}$ so
that each vertex other than the 3-vertex is recolored at most 27 times.  
(We extend to the vertices of $f$ in order of decreasing distance from its
3-vertex.)
By the
Key Lemma we can extend $\sigma'$ to a 30-good $L$-recoloring sequence for $G$
that transforms $\alpha$ to $\beta$.  This is because one neighbor of $w$
is recolored by $\sigma'$ at most 27 times.
\end{proof}

\section{Graphs with Mad \texorpdfstring{$\bm{<17/5}$}{17/5}: Lists of Size 6}
\label{6list-sec}

The goal of this section is to prove Theorem~\ref{thm2}.  Very generally, the
proof is similar to that of Theorem~\ref{thm1}.  However, the Key Lemma is not
as well suited to working with lists of even size.  So, rather than applying it
directly, we will typically prove more specialized results that give better
bounds in our particular cases.  However, the proofs will be quite similar to
that of the Key Lemma.  This remark also applies to Section~\ref{4list-sec},
since it considers lists of size 4.

\begin{thm}
\label{thm2}
Let $G$ be a graph with $\mad(G)<17/5$. If $L$ is a 6-assignment for $G$ and
$\alpha$ and $\beta$ are $L$-colorings of $G$, then $\alpha$ be can be
transformed to $\beta$ by recoloring each vertex at most 12 times, so
that every intermediate coloring is a proper $L$-coloring.  
In particular, this holds for every planar graph of girth at least 5.
\end{thm}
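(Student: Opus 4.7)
The plan is to mirror the proof of Theorem~\ref{thm1A}. Take a minimum counterexample $G$, a $6$-assignment $L$, and $L$-colorings $\alpha,\beta$ admitting no $12$-good $L$-recoloring sequence from $\alpha$ to $\beta$. First I would show $\delta(G)\ge 3$: for any vertex $v$ of degree at most $2$, induction yields a $12$-good sequence on $G-v$, which the Key Lemma extends to $G$ by recoloring $v$ at most $\lceil 2\cdot 12/(6-2-1)\rceil+1 = 9 \le 12$ times.

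Next I would prove a structural lemma asserting that every graph with $\mad(G)<17/5$ and $\delta\ge 3$ contains one of an explicit short list of configurations. The lemma would be proved by discharging with initial charge $d(v)-17/5$ (whose total is negative by the $\mad$ hypothesis) and a rule sending a fixed fraction of a unit along each edge from a $4^+$-vertex to a $3$-neighbor. A natural first attempt is $2/5$ per edge, possibly refined to incorporate ``thread lengths'' of paths of consecutive $3$-vertices. With a suitable rule, the only vertices with possibly negative final charge become the candidate reducible configurations: for example, $3$-vertices whose neighbors are all $3$-vertices, $4$-vertices with at least two $3$-neighbors, $5$-vertices all of whose neighbors are $3$-vertices, and short ``thread''-type paths of alternating low-degree vertices reminiscent of Lemma~\ref{girth4-lem}(b).

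The main obstacle is that the Key Lemma by itself is not tight enough here, as the author explicitly warns in the paragraph preceding the theorem. For a $3$-vertex $v$ we have $|L(v)|-d(v)-1 = 2$, so the Key Lemma only guarantees at most $\lceil t/2\rceil+1$ recolorings of $v$, demanding $t\le 22$; yet three previously-recolored neighbors can already supply $t=36$. I would therefore develop specialized reducibility lemmas, proved in the style of the Key Lemma but sharper. A typical improvement would reinsert a whole cluster of removed vertices jointly rather than one-at-a-time, so that the ``$+1$'' cost is paid once per cluster and the ``look-ahead'' budget is shared across several vertices; another would exploit the parity of $|L(v)|$ to save one recoloring per block. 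These specialized lemmas would be crafted to service precisely the configurations from the structural lemma.

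Finally, I would verify each configuration case-by-case: delete a carefully chosen subset $H\subseteq V(G)$, apply induction to $G-H$, and propagate the bound of $12$ recolorings per vertex outward through the successive reinsertions, invoking the specialized lemmas wherever the Key Lemma alone falls short. The remaining work is arithmetic bookkeeping to confirm that every case closes within $12$ recolorings per vertex, contradicting the choice of $G$ as a counterexample.
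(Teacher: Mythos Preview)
Your skeleton is exactly the paper's: minimum counterexample, $\delta(G)\ge 3$ via the Key Lemma, a discharging lemma to locate an unavoidable configuration, and specialized (not Key-Lemma-only) reductions for the configurations. You also correctly diagnose the even-list-size obstruction and the need to reinsert a cluster of vertices jointly; that is precisely what the paper does.

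The gap is in the discharging rule. Sending $2/5$ along each edge from a $4^+$-vertex to a $3$-neighbor forces you to reduce, among other things, a $4$-vertex $v$ with only two $3$-neighbors $w_1,w_2$ (since $4-2\cdot 2/5<17/5$). That configuration does not appear to be reducible for a $12$-good sequence: after deleting $\{v,w_1,w_2\}$, the two remaining neighbors $x_1,x_2$ of $v$ may each be recolored $12$ times, and since $|L(v)|-d_G(v)-1=1$, the look-ahead budget for $v$ against $\{x_1,x_2\}$ is only one colour per step, so $v$ is driven to roughly $24$ recolorings from the $x_i$'s alone, before any demands from the $w_i$'s. No joint-reinsertion trick fixes this, because the $x_i$'s are not under your control.

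The paper instead sends $1/5$ (not $2/5$) from each $4^+$-vertex to each $3$-neighbor, and this yields a much shorter list: (i) a $2^-$-vertex, (ii) a $3$-vertex with at least two $3$-neighbors, (iii) a $4$-vertex with \emph{four} $3$-neighbors. No $5$-vertex case and no thread-type paths are needed. The point of (iii) is that \emph{all} neighbors of $v$ are deleted, so $v$ has no ``heavy'' neighbors left in $G'$ and is recolored only when some $w_i$ demands it; the paper's joint argument then gives at most $2\cdot 4+2=10$ recolorings of $v$ and at most $12$ of each $w_i$. Case (ii) is handled by the same style of joint argument, giving $12$ for each of $v,w_1,w_2$. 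So your plan is right, but swap $2/5$ for $1/5$ and target exactly the three configurations above; the ``specialized lemmas'' you anticipate are then just the two explicit arguments for (ii) and (iii).
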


\begin{proof}
By the 
\hyperref[mad:lem]{Mad Lemma}, if $G$ is planar with girth at least 5, then
$\mad(G)<10/3<17/5$.  So, the second statement follows from the first.  Now we
prove the first.

Let $G$ be a graph with $\mad(G)<17/5$.  We claim that $G$ contains either (i)
a $2^-$-vertex or (ii) a 3-vertex with at least two 3-neighbors or (iii) a
4-vertex with four 3-neighbors.  We assume that $G$ is a counterexample to the
claim, and use discharging to reach a contradiction.  
We give each vertex charge equal to its degree and use a single
discharging rule: Each 3-vertex takes 1/5 from each $4^+$-neighbor.  By
assumption, $\delta(G)\ge 3$.  If $v$ is a 3-vertex, then the absence of (ii)
implies that $v$ has at least two $4^+$-neighbors. So $\ch^*(v) \ge 3+2(1/5) =
17/5$.  If $v$ is a 4-vertex, then the absence of (iii) implies that
$v$ has at most three 3-neighbors.  So $\ch^*(v)\ge 4-3(1/5) = 17/5$.  If $v$
is a $5^+$-vertex, then $\ch^*(v)\ge d(v)-d(v)/5 = 4d(v)/5 \ge 4$, since
$d(v)\ge 5$.  Since $\ch^*(v)\ge 17/5$ for all $v$, we contradict
our hypothesis $\mad(G)<17/5$.  This proves the claim.

We show that none of (i), (ii), and (iii) appears in a minimal
counterexample to the theorem.  

(i) Suppose $G$ has a $2^-$-vertex $v$.  
Let $G':=G-v$.  By minimality,
$G'$ has a 12-good $L$-recoloring sequence $\sigma'$ transforming
$\alpha_{\uphr G'}$ to $\beta_{\uphr G'}$.  By the Key Lemma, we extend
$\sigma'$ to a 12-good $L$-recoloring sequence transforming $\alpha$ to
$\beta$, recoloring $v$ at most $(12+12)/3+1=9$ times.

(ii) Suppose $G$ has a 3-vertex $v$ with at least two 3-neighbors, $w_1$
and $w_2$.\aside{$v$, $w_1$, $w_2$}  Let $G':=G-\{v,w_1,w_2\}$.\aside{$G'$,
$\sigma'$}  
By minimality, $G'$ has a 12-good $L$-recoloring sequence $\sigma'$ that
transforms $\alpha_{\uphr G'}$ to $\beta_{\uphr G'}$.  Let $a_1,a_2,\ldots$
denote the colors that are used by $\sigma'$ to recolor $N(w_1)\setminus
\{v\}$, let $b_1,b_2,\ldots$ denote the colors that are used by $\sigma'$
to recolor $N(w_2)\setminus \{v\}$, and let $c_1,c_2,\ldots$ denote the colors that
are used by $\sigma'$ to recolor $N(v)\setminus \{w_1,w_2\}$.  
(Each of $a_1,\ldots$ and $b_1,\ldots$ and $c_1,\ldots$ is in order and with
repetition.) The proof is similar to that of the Key Lemma, but a bit more
involved.  For simplicity, we assume that $w_1w_2\notin E(G)$, since the other
case is similar but easier.

When $\sigma'$ is about to recolor $N(w_1)\setminus \{v\}$ with $a_1$, we
first recolor $w_1$ to avoid $\{a_1,a_2,a_3\}$, as well as the two colors
currently used on $N(w_1)\setminus\{v\}$.  If this requires coloring $w_1$ with
the color currently on $v$, then before recoloring $w_1$ we recolor $v$ (as we
will explain shortly).  When $\sigma'$ is about to recolor $N(w_1)\setminus \{v\}$
with $a_4$, we first recolor $w_1$ to avoid $\{a_4,a_5,a_6\}$, again recoloring
$v$ beforehand if needed.  When $\sigma'$ is about to recolor $N(w_1)\setminus
\{v\}$ with $a_7$, we first recolor $w_1$ to avoid $\{a_7,a_8\}$ and all colors
currently used on $N(w_1)$.  Thereafter, each time that $\sigma'$ specifies
recoloring $N(w_1)\setminus \{v\}$ with the color currently on $w_1$, we first
recolor $w_1$ to avoid the next two colors to be used by $\sigma'$ on
$N(w_1)\setminus \{v\}$, as well as the three colors currently used on $N(w_1)$.
Finally, after $N(w_1)\setminus \{v\}$ is recolored to agree with $\beta$, we
recolor $w_1$ with $\beta(w_1)$, again recoloring $v$ beforehand if needed.
Since $\sigma'$ is 12-good, at most 24 colors are used by $\sigma'$ on
$N(w_1)\setminus\{v\}$.  So the number of times that $w_1$ is recolored is at
most $2+(24-6)/2+1=12$.  We treat $w_2$ in exactly the same way as $w_1$.  So
what remains is to specify how we handle $v$.

Denote $N(v)\setminus\{w_1,w_2\}$ by $\{x\}$.\aside{$x$}
Each time that $\sigma'$ is about to recolor $x$ with a color currently used on
$v$, we first recolor $v$ to avoid the next two colors used by $\sigma'$ on $x$,
as well as the three colors currently used on $N(v)$.  Each time that some $w_i$ 
requires $v$ to be recolored (at most twice for each $w_i$ near the start,
and at most once total near the end, for up to five times total, as we explain
below), before $w_i$ is recolored, we recolor $v$ to avoid its current
color, the up to three colors currently used on $N(v)$, as well as the next
color that will be used on $x$, i.e., the next $c_j$.  

Finally, assume
that $G$ has been recolored from $\alpha$ to some $\beta'$ that agrees with
$\beta$ except possibly on $\{v,w_1,w_2\}$.  
Recolor $v$ to avoid $\{\beta'(w_1), \beta'(w_2), \beta(w_1), \beta(w_2),
\beta(x)\}$, recolor each $w_i$ to $\beta(w_i)$, then recolor $v$ to $\beta(v)$.
So how many times is $v$
recolored?  It might be recolored due to each $w_i$ at most twice early on.
It might be recolored $12/2$ times due to colors used by $\sigma'$ on $x$.
It is subtle, but true, that recoloring $v$ due to some $w_i$ does not cause us
to ``lose ground'' with respect to upcoming colors to be used on $x$.  Each time
that we recolor $v$ due to $x$, we do so to avoid the next two colors to be used
on $x$.  However, immediately after we recolor $v$, we use the first of those
colors on $x$.  So the color of $v$ is chosen to avoid at most one upcoming
color on $x$.  Whenever $v$ is recolored due to some $w_i$, the color for $v$ is
again chosen to avoid the next upcoming color used on $x$.  So the total number
of times that $v$ is recolored is $2+2+12/2+2=12$. 

(iii) Suppose $G$ contains a 4-vertex $v$ with four 3-neighbors,
$w_1,w_2,w_3,w_4$.  (Again, for simplicity we assume that $w_iw_j\notin E(G)$
for all $i,j\in\{1,2,3,4\}$; the other cases are similar.)
The proof is nearly identical to that for (ii) above, with each
$w_i$ being treated as $w_1$ and $w_2$ were above.  Suppose that $\alpha$ is
transformed to some $\beta'$ that agrees with $\beta$ except on
$\{v,w_1,w_2,w_3,w_4\}$.  Now each $w_i$ such that $\beta(w_i)\ne \beta'(v)$ is
recolored to $\beta(w_i)$.  Next $v$ is recolored to avoid its current color and
those currently on $N(v)$.  Afterward, each remaining $w_i$ is recolored to
$\beta(w_i)$.  Finally, $v$ is recolored to $\beta(v)$.  The analysis for each
$w_i$ is identical to that for (ii).  And $v$ is recolored at most $2(4)+2=10$
times.
%
\end{proof}

For planar graphs of girth at least 5, we can improve the above result to a
10-good $L$-recoloring sequence.  This is because such graphs contain either a
$2^-$-vertex or a path of length 3 with all vertices of degree 3.  (In fact, the
vertices of such a path lie on a 5-face $f$, and the fifth vertex on $f$ is a
$5^-$-vertex.)  This was proved in~\cite[Lemma~1]{CY-linear}.  To show that this
configuration is reducible (for a 10-good $L$-recoloring sequence), the analysis 
is similar to that for (ii) above.  However, we prefer the
proof given because it (a) is self-contained, (b) highlights the fact that
planarity is not needed, only sparseness, and (c) works for the larger class of
graphs with $\mad<17/5$ (not just those with $\mad<10/3$).  In fact, with more
work the bound on $\mad$ can be increased a bit.  But we have opted for a
simpler proof of a slightly weaker result.

\section{Graphs with Mad \texorpdfstring{$\bm{<22/9}$}{22/9}: Lists of Size 4}
\label{4list-sec}

The goal of this section is to prove the following result.

\begin{thm}
\label{thm3}
Let $G$ be a graph with $\mad(G)<22/9$. If $L$ is a 4-assignment for $G$ and
$\alpha$ and $\beta$ are $L$-colorings of $G$, then $\alpha$ be can be
transformed to $\beta$ by recoloring each vertex at most 14 times, so
that every intermediate coloring is a proper $L$-coloring.  In particular, this
holds for every planar graph with girth at least 11.
\end{thm}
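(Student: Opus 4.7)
The plan is to follow the two-step template of Theorems~\ref{thm1} and~\ref{thm2}: extract by discharging a short list of reducible configurations that must appear whenever $\mad(G)<22/9$, and then verify each is reducible in a smallest counterexample.

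For the structural step, I claim every graph $G$ with $\mad(G)<22/9$ contains one of \textbf{(i)} a vertex of degree at most $1$, \textbf{(ii)} two adjacent $2$-vertices, or \textbf{(iii)} a $3$-vertex with three $2$-neighbors. Give each vertex $v$ charge $d(v)$ and have every $3^+$-vertex send $2/9$ along each edge to an adjacent $2$-vertex. If $G$ avoids (i)--(iii), then each $2$-vertex has two $3^+$-neighbors and ends at $2+2(2/9)=22/9$; each $3$-vertex has at most two $2$-neighbors and ends at $\ge 3-2(2/9)=23/9$; and each $d$-vertex with $d\ge 4$ ends at $\ge d-d(2/9)=7d/9\ge 28/9$. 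Summing gives $2|E(G)|\ge 22|V(G)|/9$, contradicting $\mad(G)<22/9$.

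Let $G$ be a smallest counterexample. Case~\textbf{(i)} is the Key Lemma applied to $G-v$: the $1^-$-vertex $v$ is recolored at most $\lceil 14/(4-1-1)\rceil+1=8$ times. For case~\textbf{(ii)}, let $v,w$ be adjacent $2$-vertices with external neighbors $u_1,u_2$, set $G':=G-\{v,w\}$, and let $\sigma'$ be a $14$-good $L$-recoloring sequence for $G'$, which exists by minimality. List the colors $\sigma'$ uses on $u_1$ as $a_1,a_2,\ldots$ Interleave $v$-recolorings by the rule: whenever $\sigma'$ is about to recolor $u_1$ with $a_j$ and the current color of $v$ equals $a_j$, first recolor $v$ to avoid $\{a_j,a_{j+1},w^{\mathrm{curr}}\}$; treat $u_2$ and $w$ symmetrically with $b_1,b_2,\ldots$. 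Since $|L(v)|=4$, this leaves at least one admissible color, and since each $w$-recolor avoids $v$'s current color, $w$ never forces $v$ to move. Hence $v$ is triggered only by $u_1$, and the one-step look-ahead ensures that each triggered recolor covers two consecutive $u_1$-changes, so $v$ is recolored at most $\lceil 14/2\rceil=7$ times plus a constant-size boundary adjustment to match $\alpha$ and $\beta$; the same bound holds for $w$. Case~\textbf{(iii)} is analogous: delete the pendant $\{v,w_1,w_2,w_3\}$, apply minimality to $G':=G-\{v,w_1,w_2,w_3\}$, hold $v$ at color $\alpha(v)$ throughout $\sigma'$, and apply the look-ahead rule to each pair $(w_i,u_i)$ with the third avoidance being $\alpha(v)$ instead of a sibling's color. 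Each $w_i$ is then recolored at most $\lceil 14/2\rceil+O(1)$ times, and an $O(1)$ fixup at the end reconfigures $\{v,w_1,w_2,w_3\}$ to $\beta$'s restriction.

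The main obstacle is that $4$-lists leave zero slack for $2$-vertices: the naive Key Lemma bound on a single deleted $2$-vertex is $\lceil(14+14)/(4-2-1)\rceil+1=29$, far above the target. The only route is a one-step look-ahead that saturates the list (three forbidden colors leaving exactly one admissible choice), which means any mutual triggering between $v$ and $w$ in~(ii), or between $v$ and the $w_i$ in~(iii), would immediately blow past $14$. The delicate content of the proof is verifying that the coupling rule---each $2$-vertex's recolor avoids its partner's current color---decouples these chains so that only the external $u_i$ ever force a recolor, together with handling the end-of-sequence fixup in~(iii), where $v$ must sometimes be moved off $\alpha(v)$ through a carefully ordered intermediate recoloring to let the remaining $w_i$ settle at $\beta$.
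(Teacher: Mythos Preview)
Your discharging claim is correct: if $\mad(G)<22/9$ then $G$ contains a $1^-$-vertex, two adjacent $2$-vertices, or a $3$-vertex with three $2$-neighbors. The difficulty is that your reducibility arguments for (ii) and (iii) do not go through, and this is not a matter of filling in details.

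Consider your rule in case (ii): when $u_1$ is about to become $a_j$ and $v$ currently has color $a_j$, you recolor $v$ to avoid $\{a_j,a_{j+1},w^{\mathrm{curr}}\}$. But this recoloring happens \emph{before} $u_1$ changes, so for the intermediate coloring to be proper you must also have $v^{\mathrm{new}}\ne u_1^{\mathrm{curr}}$. That is four forbidden colors from a list of size four, and nothing prevents $u_1^{\mathrm{curr}},w^{\mathrm{curr}},a_j,a_{j+1}$ from being pairwise distinct (properness gives only $a_j\notin\{u_1^{\mathrm{curr}},w^{\mathrm{curr}}\}$, and $a_{j+1}\ne a_j$). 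The same obstruction hits case (iii), where the four forbidden colors for $w_i$ are $\{u_i^{\mathrm{curr}},\alpha(v),b_j,b_{j+1}\}$. With $4$-lists there is zero slack: a two-step look-ahead on one neighbor cannot coexist with respecting the other neighbor's current color.

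One might try to salvage this by letting $v$ bump $w$ (or $w_i$ bump $v$) whenever the unique look-ahead color collides with the partner. But then the partner's recoloring count is no longer driven solely by its external neighbor, and the $\lceil 14/2\rceil$ bound dissolves. Indeed, the paper's own asymmetric analysis of a $2$-thread (Lemma~\ref{lem1}) gives $v_3$ recolored up to $s+3$ times when $v_4$ is recolored $s$ times, which is $17$ when $s=14$. This is precisely why the paper does \emph{not} treat a bare $2$-thread or a bare $3$-vertex with three $2$-neighbors as reducible. Instead it isolates larger configurations (Lemmas~\ref{lem4}--\ref{lem6}) in which one first cascades through an auxiliary deletion so that some endpoint of the remaining $2$-thread is already known to be recolored at most $4$, $8$, or $11$ times, and only then applies Lemma~\ref{lem1}. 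Your simpler unavoidable set is real, but it does not coincide with the set of configurations that are $14$-good--reducible; the paper's more elaborate discharging (with rules (R2)--(R3) tracking weak neighbors of $3$-vertices) is there exactly to force one of these cascadable configurations.
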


Note that the latter statement follows from the former by the 
\hyperref[mad:lem]{Mad Lemma}; so we
prove the former.  We assume $G$ is a smallest counterexample,
and again use discharging.  This time we show that $G$ has average
degree at least $22/9$, contradicting the hypothesis.  Generally
speaking, we send charge from $3^+$-vertices, which begin with extra
charge, to 2-vertices, which begin needing more charge.  To prove that each
$3$-vertex ends with sufficient charge, we show that various
configurations (see Section~\ref{4list-reducibility-sec}) cannot appear in
our minimum counterexample $G$.

A \emph{thread} in $G$ is a path\footnote{We also allow a thread to be a cycle
with a single $3^+$-vertex (which serves as both endpoints of the thread) and
all other vertices of degree 2.} with all internal vertices of degree 2.
A \Emph{$k$-thread} is a thread with $k$ internal vertices.  
A \Emph{$3_{a,b,c}$-vertex} is a 3-vertex that is the
endpoint of a maximal $a$-thread, a maximal $b$-thread, and a maximal
$c$-thread, all distinct.
A \Emph{weak neighbor} of a $3^+$-vertex $v$ is a $3^+$-vertex $w$ such that $v$
and $w$ are endpoints of a common thread.  A 2-vertex $v$ is \Emph{nearby} a
$3^+$-vertex $w$ if $v$ is an interior vertex of a thread with $w$ as one endpoint.

Since the proof is longer than our previous proofs, we have one subsection for
reducibility and another for discharging.  In both subsections, $G$ is a
minimum counterexample to Theorem~\ref{thm3}.

\subsection{Reducibility}
\label{4list-reducibility-sec}
\begin{lem}
\label{lem0}
$G$ is connected and $\delta(G)\ge 2$.
\end{lem}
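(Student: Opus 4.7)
My plan is to handle both assertions by appealing to minimality of $G$, treating connectedness via an independent-component argument and the bound $\delta(G)\ge 2$ by a direct application of the Key Lemma, mirroring the style used in case (i) of the proof of Theorem~\ref{thm2}.

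For connectedness, suppose $G$ has components $G_1,\ldots,G_m$ with $m\ge 2$. Each $G_i$ is strictly smaller than $G$, and since $\mad$ is monotone under taking subgraphs, $\mad(G_i)\le \mad(G)<22/9$. Restricting $L$, $\alpha$, and $\beta$ to each $G_i$ gives a valid smaller instance, so by minimality there is a $14$-good $L$-recoloring sequence $\sigma_i$ transforming $\alpha_{\uphr G_i}$ to $\beta_{\uphr G_i}$. Since the $V(G_i)$ are pairwise disjoint and no edges run between them, the concatenation $\sigma_1\sigma_2\cdots\sigma_m$ is a proper $L$-recoloring sequence for $G$ that recolors every vertex at most $14$ times. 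This contradicts that $G$ is a counterexample, so $G$ is connected.

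For $\delta(G)\ge 2$, suppose $v$ is a vertex with $d_G(v)\le 1$, and let $G':=G-v$. Again $\mad(G')\le \mad(G)<22/9$, so by minimality $G'$ admits a $14$-good $L$-recoloring sequence $\sigma'$ transforming $\alpha_{\uphr G'}$ to $\beta_{\uphr G'}$. Let $t$ be the number of times $\sigma'$ recolors $N_G(v)$; since $|N_G(v)|\le 1$ and $\sigma'$ is $14$-good, $t\le 14$. Applying the Key Lemma with $|L(v)|=4$ and $d_G(v)\le 1$, we extend $\sigma'$ to an $L$-recoloring sequence for $G$ that recolors $v$ at most
\[
\left\lceil \frac{t}{|L(v)|-d_G(v)-1}\right\rceil + 1 \;\le\; \left\lceil \frac{14}{2}\right\rceil + 1 \;=\; 8
\]
times (and only once if $v$ is isolated). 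The resulting sequence is $14$-good and transforms $\alpha$ to $\beta$, again contradicting minimality. Hence $\delta(G)\ge 2$.

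Neither step is technically hard; the only thing to be careful about is confirming that the denominator $|L(v)|-d_G(v)-1$ in the Key Lemma is strictly positive (which is why lists of size $4$ suffice here to absorb degree-$1$ vertices but fail for degree-$3$ vertices, explaining why subsequent reducibility lemmas will need more delicate arguments). The main obstacle, such as it is, lies not in this lemma but in the fact that the Key Lemma's $+1$ overhead becomes wasteful when applied to longer threads of $2$-vertices, which is precisely the issue that motivates the more specialized arguments promised at the start of Section~\ref{4list-sec}.
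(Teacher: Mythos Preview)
Your proof is correct and follows essentially the same approach as the paper: minimality applied to components for connectedness, and the Key Lemma applied to a $1^-$-vertex (yielding at most $14/2+1=8$ recolorings) for $\delta(G)\ge 2$. Your version is slightly more explicit in verifying $\mad(G_i)<22/9$ and in treating the isolated-vertex case, but the substance is the same.
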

\begin{proof}
If $G$ is disconnected, then each component has a 14-good $L$-recoloring sequence,
by minimality.  Combining these gives a 14-good $L$-recoloring sequence for $G$.
So $G$ is connected.  Suppose instead that $G$ contains a 1-vertex $v$ and let $G':=G-v$. 
By minimality, $G'$ has a 14-good $L$-recoloring sequence $\sigma'$ that
transforms $\alpha_{\uphr G'}$ to $\beta_{\uphr G'}$.  Since the neighborhood
of $v$ is recolored at most 14 times, by the Key Lemma we can extend $\sigma'$
to a 14-good recoloring sequence for $G$ that recolors $v$ at most $14/2+1=8$ times.  
\end{proof}

\begin{lem}
\label{lem1}
Let $v_1v_2v_3v_4$ be a 2-thread in some subgraph $H$ of $G$.  Let
$H':=H-\{v_2,v_3\}$.  Let $\sigma'$ be a 14-good $L$-recoloring sequence for $H'$
that transforms $\alpha_{\uphr H'}$ to $\beta_{\uphr H'}$.  If $\sigma'$
recolors $v_4$ at most $s$ times, and $s\le 11$, then $H$ has a 14-good
$L$-recoloring sequence that recolors $v_3$ at most $s+3$ times and transforms
$\alpha$ to $\beta$.
\end{lem}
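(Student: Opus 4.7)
The plan is to extend $\sigma'$ to a recoloring sequence $\sigma$ for $H$ by processing $\sigma'$ step by step and inserting recolorings of $v_2, v_3$ in the spirit of the Key Lemma but tailored to the 2-thread. Throughout, I maintain the invariants $v_2 \ne v_1$, $v_2 \ne v_3$, $v_3 \ne v_4$. For each step of $\sigma'$: if the recoloring is not of $v_1$ or $v_4$, just copy it; if $\sigma'$ is about to recolor $v_1$ to a color equal to the current color of $v_2$, first recolor $v_2$ to one of the two colors in $L(v_2)$ distinct from the impending $v_1$-value and from the current $v_3$-value, and handle $v_4, v_3$ analogously. The crucial point is that by always choosing $v_2$'s new color to avoid $v_3$'s current color and vice versa, neither of $v_2, v_3$ ever forces a recoloring of the other. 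So during $\sigma'$, $v_2$ is recolored at most $14$ times (once per $v_1$-trigger) and $v_3$ at most $s$ times (once per $v_4$-trigger).

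After $\sigma'$ finishes, we have $v_1 = \beta(v_1)$ and $v_4 = \beta(v_4)$, and we must reach $v_2 = \beta(v_2), v_3 = \beta(v_3)$. A direct case analysis shows this endgame costs at most one extra recoloring of $v_2$ and at most two extra of $v_3$; the delicate subcase is when $v_2 = \beta(v_3)$ and $v_3 = \beta(v_2)$ at that moment, forcing a three-step swap via the single intermediate color for $v_3$ distinct from $\beta(v_2), \beta(v_3)$, and $\beta(v_4)$. On its own this yields $v_3 \le s+2$ (fine) but leaves $v_2$ at $15$ in the worst case, one over budget.

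To save that extra $v_2$-recoloring, I would prefer $\beta(v_2)$ whenever we recolor $v_2$. Because $\beta(v_2) \ne \beta(v_1)$, on the final $v_1$-change (to $\beta(v_1)$) the target $\beta(v_2)$ is always one of the two valid options, provided $v_3 \ne \beta(v_2)$ at that instant. In the obstructed subcase $v_3 = \beta(v_2)$, I insert one proactive $v_3$-recoloring beforehand, picking $v_3$'s new color from the single option in $L(v_3)$ distinct from the current $v_4$, the current $v_2$, and $\beta(v_2)$. This trade costs one extra $v_3$-recoloring but pins $v_2$ at $\beta(v_2)$, so no endgame $v_2$-recoloring is needed. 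If the final $v_1$-change does not trigger $v_2$ at all, then the lazy count for $v_2$ is at most $13$ during $\sigma'$, leaving room for a single endgame adjustment. In either case, $v_2 \le 14$ and $v_3 \le s+3$.

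The main obstacle will be the endgame case analysis itself: correctly stitching the proactive $v_3$-recoloring with the lazy $v_4$-triggered recolorings, and verifying that the preemption on the final $v_1$-trigger meshes properly with any $v_4$-changes still remaining in $\sigma'$ after it. These corner cases are not conceptually deep, but they demand careful bookkeeping to avoid overcounting $v_3$.
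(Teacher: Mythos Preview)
Your lazy-trigger framework for $v_2$ and $v_3$ matches the paper's, and the endgame bookkeeping for $v_3$ is fine. The gap is in your mechanism for holding $v_2$ to $14$ recolorings. When you recolor $v_2$ before a $v_1$-step, the intermediate coloring must also be proper, so $v_2$'s new color must avoid the \emph{current} $v_1$-value in addition to the impending one and the current $v_3$-value. This is why your ``two valid options'' is really at most one in general, and more importantly it breaks your pin: on the final $v_1$-trigger, if the pre-change $v_1$-value equals $\beta(v_2)$, you cannot set $v_2=\beta(v_2)$ even when $v_3\ne\beta(v_2)$, and your proactive $v_3$-move does nothing to help.

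This subcase is not exotic. Take $L(v_2)=\{1,2,3,4\}$, keep $v_3\equiv 3$ throughout (so $s=0$), and let $\sigma'$ cycle $v_1$ through $1,2,4,1,2,4,\ldots$ for $14$ steps, ending at $a_{14}=\beta(v_1)=4$ and $a_{13}=2=\beta(v_2)$. With $\alpha(v_2)=2$, every $v_1$-step triggers $v_2$, and each time the three constraints $\{a_{i-1},a_i,3\}$ are distinct, leaving a unique legal new color for $v_2$ which is exactly $a_{i+1}$. You get $14$ forced $v_2$-recolorings with no room to ``prefer $\beta(v_2)$'' anywhere; on the last one the current $v_1$-value is $2=\beta(v_2)$, the pin is blocked, $v_2$ is forced to $1$, and the endgame needs a $15$th $v_2$-move.

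The paper saves the same single $v_2$-recoloring, but at the \emph{start} rather than the end: before the first $v_1$-change it recolors $v_2$ to avoid $a_0,a_1,a_2$ simultaneously (spending one proactive $v_3$-recoloring if the unique remaining color collides with $v_3$), and thereafter handles $v_2$ lazily. Because $a_0$ is precisely the current $v_1$-value, there is no analogue of your blocking case, and the count is $1+(14-2)+1=14$ for $v_2$ and $1+s+2=s+3$ for $v_3$. Your endgame idea could perhaps be rescued with a heavier case split, but the front-loaded look-ahead is the clean fix.
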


\begin{proof}
Let $a_1,\ldots$
denote the sequence of new colors used by $\sigma'$ on $v_1$ and let
$b_1,\ldots$ denote the sequence of new colors used by $\sigma'$ on $v_4$.  For
convenience, let $a_0:=\alpha(v_1)$ and $b_0:=\alpha(v_4)$.
Each time that $\sigma'$ recolors $v_4$ from $b_i$ to
$b_{i+1}$, we first, if needed, recolor $v_3$ to avoid $b_i, b_{i+1}$, and the
color currently used on $v_2$.  Since $L$ is a 4-assignment, $v_3$ always has an
available color.  When $\sigma'$ recolors $v_1$ from  $a_0$ to $a_1$, we first
recolor $v_2$ to avoid $a_0,a_1,a_2$; if this requires recoloring $v_2$ with
the color currently used on $v_3$, then beforehand we recolor $v_3$ to avoid
the colors currently on $v_2, v_3, v_4$.  Afterward, we recolor $v_2$ to avoid
$a_0,a_1,a_2$.  
Thereafter, each time that $\sigma'$ recolors $v_1$ from $a_i$ to
$a_{i+1}$, we first recolor $v_2$ to avoid $a_i$, $a_{i+1}$, and the color
currently used on $v_3$.  

This process recolors $\alpha$ to an $L$-coloring $\beta'$
that agrees with $\beta$ everywhere except possibly on $v_2$ and $v_3$.
Now, if needed, we recolor $v_2$ with $\beta(v_2)$.  If
$\beta(v_2)=\beta'(v_3)$, then we first recolor $v_3$ to some
other arbitrary color, currently unused on $v_2$, $v_3$, and $v_4$.  Finally,
if needed, we recolor $v_3$ with $\beta(v_3)$.
The number of recolorings of $v_2$ is at most $1+(14-2)+1=14$.
The number of recolorings of $v_3$ is at most $1+s+2\le 14$.
\end{proof}

\begin{lem}
\label{lem2}
Let $v_1v_2v_3v_4v_5$ be a 3-thread in some subgraph $H$ of $G$ (not necessarily
proper).  
Let $H':=H-\{v_2,v_3,v_4\}$.  Let $\sigma'$ be a 14-good
$L$-recoloring sequence for $H'$ that transforms $\alpha_{\uphr H'}$
to $\beta_{\uphr H'}$.
We can extend $\sigma'$ to a 14-good $L$-recoloring sequence for $H$
that transforms $\alpha_{\uphr H}$ to $\beta_{\uphr H}$ and recolors $v_3$
at most 4 times.  In particular, $G$ has no 3-threads.
\end{lem}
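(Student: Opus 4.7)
My plan is to mirror the Lemma 1 argument on both ``halves'' of the 3-thread, $v_1v_2v_3$ and $v_3v_4v_5$, keeping $v_3$ essentially frozen throughout a main phase and then running a short endgame to move $v_2,v_3,v_4$ to their $\beta$-values. Write $a_0,a_1,\ldots$ and $b_0,b_1,\ldots$ for the successive colors taken by $v_1$ and $v_5$ under $\sigma'$. In the main phase, when $\sigma'$ is first about to recolor $v_1$, I recolor $v_2$ to the unique color in $L(v_2)\setminus\{a_0,a_1,a_2\}$, preceded, if that color currently sits on $v_3$, by a recoloring of $v_3$ to the unique color of $L(v_3)$ avoiding the current colors on $v_2,v_3,v_4$; this is exactly Lemma 1's opening move. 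Each subsequent recoloring of $v_1$ by $\sigma'$ is preceded, if needed, by a recoloring of $v_2$ to the unique color in $L(v_2)$ avoiding $a_i,a_{i+1}$, and $v_3$'s current color; because this choice already avoids $v_3$, no further $v_3$-help is required. Treating $v_4,v_5$ symmetrically produces at most two ``helps'' of $v_3$ in the main phase and at most $1+(14-2)=13$ recolorings of each of $v_2$ and $v_4$.

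In the endgame, write $c_2,c_3,c_4$ for the colors at $v_2,v_3,v_4$ when the main phase ends. If $c_3=\beta(v_3)$ I recolor $v_2\to\beta(v_2)$ and $v_4\to\beta(v_4)$ directly. If $c_3\ne\beta(v_3)$ and neither $c_2$ nor $c_4$ equals $\beta(v_3)$, I recolor $v_3\to\beta(v_3)$ first and then move $v_2,v_4$. Otherwise I perform a brief ``dance'': recolor $v_3$ to an intermediate color $c_3^*\in L(v_3)$, move $v_2$ and $v_4$ to $\beta$ in an order chosen by the sub-case, and finally recolor $v_3\to\beta(v_3)$. The endgame uses at most two $v_3$-recolorings and at most one recoloring of each of $v_2$ and $v_4$, so $v_2$ and $v_4$ are recolored at most $13+1=14$ times overall, every other vertex of $H$ inherits its count from $\sigma'$, and $v_3$ is recolored at most $2+2=4$ times. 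The ``in particular'' clause follows by applying the lemma with $H:=G$: the minimality of $G$ furnishes a 14-good $L$-recoloring sequence for $G-\{v_2,v_3,v_4\}$, which then extends to a 14-good sequence for $G$, contradicting that $G$ is a counterexample.

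The main delicacy is verifying the existence of $c_3^*$ in the indirect endgame case: a priori the constraints $\{c_2,c_4,\beta(v_2),\beta(v_4)\}$ would be four colors, which would exhaust the 4-list $L(v_3)$. However, the case assumption (some $c_i=\beta(v_3)$) forces a collapse. When both $c_2=c_4=\beta(v_3)$, the pair $\{c_2,c_4\}$ collapses to a single color, leaving only three distinct constraints. When only one side collapses (say $c_2=\beta(v_3)$ but $c_4\ne\beta(v_3)$), I reorder the last few steps so that $v_4\to\beta(v_4)$ happens after $v_3\to\beta(v_3)$; this removes $\beta(v_4)$ from the constraints on $c_3^*$ and again leaves at most three distinct colors to avoid. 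In each sub-case one color remains available in $L(v_3)$, and that is what $c_3^*$ needs.
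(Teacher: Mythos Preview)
Your proof is correct and follows essentially the same approach as the paper, which merely sketches the argument by saying that $v_2$ and $v_4$ are treated exactly as $v_2$ was in Lemma~\ref{lem1}, with $v_3$ recolored at most $2(1)+1+1=4$ times. Your careful endgame case analysis---in particular the reordering trick in the asymmetric sub-case to guarantee that $c_3^*$ need avoid only three colors---actually fills in a detail that the paper's sketch glosses over.
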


\begin{proof}
See the left of Figure~\ref{lem2,3-fig}.
The proof is very similar to that of Lemma~\ref{lem1}, so we just sketch the
details.  We essentially treat both $v_2$ and $v_4$ in the way that we treated
$v_2$ in the proof of Lemma~\ref{lem1}; we extend the recoloring sequence to
$v_2$ and $v_4$ so that in total they require $v_3$ to be recolored at most
three times (once each near the start and once together near the end to recolor
$v_2$ and $v_4$ with $\beta(v_2)$ and $\beta(v_4)$).  Finally, we may need
to recolor $v_3$ at the end, so that it uses the color $\beta(v_3)$.  
Thus, $v_3$ is recolored at most $2(1)+1+1=4$ times.
\end{proof}

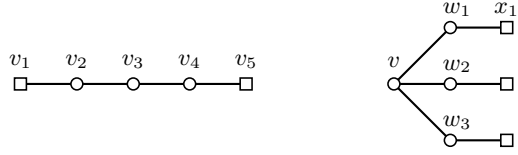
\begin{figure}[!h]
\centering
\begin{tikzpicture}[thick, scale=.75]
\tikzstyle{uStyle}=[shape = circle, minimum size = 4.5pt, inner sep = 0pt,
outer sep = 0pt, draw, fill=white, semithick]
\tikzstyle{sStyle}=[shape = rectangle, minimum size = 4.5pt, inner sep = 0pt,
outer sep = 0pt, draw, fill=white, semithick]
\tikzstyle{lStyle}=[shape = circle, minimum size = 4.5pt, inner sep = 0pt,
outer sep = 0pt, draw=none, fill=none]
\tikzset{every node/.style=uStyle}
\def\off{.4}

\draw (1,0) node[sStyle] (v1) {} --
(2,0) node (v2) {} --
(3,0) node (v3) {} --
(4,0) node (v4) {} --
(5,0) node[sStyle] (v5) {};
\foreach \i in {1,...,5}
\draw (v\i) ++ (0,\off) node[lStyle] {\footnotesize{$v_\i$}};

\begin{scope}[xshift=3in]
\draw (0,0) 
node (v) {} -- (1,1) node(w1) {} -- (2,1) node[sStyle] {}
(v) -- (1,0) node(w2) {} -- (2,0) node[sStyle] {}
(v) -- (1,-1) node(w3) {} -- (2,-1) node[sStyle] {};
\draw (v) ++ (0,\off) node[lStyle] {\footnotesize{$v$}};
\draw (2,1) ++ (0,.8*\off) node[lStyle] {\footnotesize{$x_1$}};

\foreach \i in {1,...,3}
\draw (w\i) ++ (.3*\off,.8*\off) node[lStyle] {\footnotesize{$w_\i$}};
\end{scope}
\end{tikzpicture}
\caption{A 3-thread (left), as in Lemma~\ref{lem2}, and a 3-vertex $v$ with three 2-neighbors
(right), as in Lemma~\ref{lem3}.\label{lem2,3-fig}
Here and throughout Section~\ref{4list-reducibility-sec},
round vertices have all incident edges drawn, but square vertices have some
incident edges undrawn.}

\end{figure}

\begin{lem}
\label{lem3}
Let $v$ be a 3-vertex with 2-neighbors $w_1$, $w_2$, $w_3$, and let $x_1$ be the
other neighbor of $w_1$.  Let $G':=G-\{v,w_1,w_2,w_3\}$, and let
$\sigma'$ be a 14-good $L$-recoloring sequence that transforms
$\alpha_{\uphr G'}$ to $\beta_{\uphr G'}$.
If $\sigma'$ recolors $x_1$ at most 9 times, then $G$ has a
14-good $L$-recoloring sequence that transforms $\alpha$ to $\beta$.
\end{lem}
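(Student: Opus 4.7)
I will extend $\sigma'$ to a 14-good $L$-recoloring sequence on $G$ by processing the deleted vertices in the order $w_1,\ v,\ w_2,\ w_3$. This order is chosen because $|L(v)| - d_G(v) - 1 = 0$ makes the Key Lemma useless when all three of $v$'s neighbors are present; processing $v$ while only $w_1$ is present sidesteps this, and the hypothesis that $x_1$ is recolored at most $9$ times is precisely the slack that lets the Key Lemma close the loop.

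Stage~1 adds $w_1$, whose only neighbor in $G'\cup\{w_1\}$ is $x_1$ (recolored $\le 9$): the Key Lemma yields a 14-good extension $\sigma_1$ recoloring $w_1$ at most $\lceil 9/2\rceil+1 = 6$ times. Stage~2 adds $v$, whose only neighbor in $G'\cup\{w_1,v\}$ is $w_1$ (recolored $\le 6$): the Key Lemma extends $\sigma_1$ to a 14-good sequence $\sigma_2$ recoloring $v$ at most $\lceil 6/2\rceil+1 = 4$ times.

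Stage~3 adds $w_2$ and $w_3$. Here the Key Lemma is inadequate since each $w_i$ has two neighbors ($v$ and $x_i$) together recolored at most $4+14 = 18$ times, producing the weak bound $\lceil 18/1\rceil+1 = 19$. Instead, for each $i \in \{2, 3\}$ I mimic the tailored extension from Lemma~\ref{lem1}: walking through $\sigma_2$, immediately before each recoloring of $x_i$ from $a_j$ to $a_{j+1}$ I recolor $w_i$ (if needed) to avoid $\{a_j, a_{j+1}, v\text{'s current color}\}$, which blocks three of the four colors in $L(w_i)$. I use Lemma~\ref{lem1}'s 2-color look-ahead at the initial $x_i$-step, first temporarily moving $v$ aside if the lone free color would be blocked. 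Exactly as in Lemma~\ref{lem1}, this yields at most $1 + (14 - 2) + 1 = 14$ recolorings of $w_i$ from the $x_i$-side.

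The principal obstacle is that $v$-recolorings inherited from $\sigma_2$ can conflict with $w_i$'s current color and force extra $w_i$-moves in Stage~3, pushing $w_i$'s total past $14$. To neutralize this, I will tighten Stage~2 so that each $v$-recoloring also avoids $\alpha(w_2)$ and $\alpha(w_3)$ (the colors that $w_2$ and $w_3$ carry throughout Stage~2, since they have not yet been added to the working subgraph); since $|L(v)|=4$ and up to four simultaneous restrictions may be requested, the few moments where no free color exists for $v$ are handled by a preliminary move of $w_2$ or $w_3$ to a compatible color. A careful accounting then keeps every vertex within the 14-recoloring budget, and a brief final cleanup aligns every vertex with $\beta$, producing the desired 14-good $L$-recoloring sequence.
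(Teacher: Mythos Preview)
Your Stages 1 and 2 are correct, but Stage 3 does not close. The ``tightening'' you propose for Stage 2 makes each $v$-recoloring avoid only the \emph{initial} colors $\alpha(w_2),\alpha(w_3)$. Once Stage 3 runs, $w_2$ and $w_3$ are repeatedly recolored in response to $x_2,x_3$, so their current colors drift away from $\alpha(w_2),\alpha(w_3)$; a later $v$-recoloring inherited from $\sigma_2$ can then collide with $w_2$'s or $w_3$'s \emph{current} color after all. Your remedy---a preliminary move of $w_2$ or $w_3$---costs an extra recoloring of that $w_i$. Since $v$ is recolored up to four times (plus the two extra $v$-moves you already budget for the initial look-aheads on $w_2,w_3$), several such preliminary moves may be forced; but each $w_i$'s count from the $x_i$-side is already tight at $1+(14-2)+1=14$, so even one extra move breaks the budget. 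The sentence ``a careful accounting then keeps every vertex within the 14-recoloring budget'' is precisely the missing step, and I do not see how to complete it in this order.

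The paper sidesteps all of this by reversing your order: it adds $w_1$ \emph{last}. In $G-\{w_1\}$ the vertex $v$ has degree $2$, so $x_2w_2vw_3x_3$ is a genuine 3-thread there, and Lemma~\ref{lem2} extends $\sigma'$ directly to a 14-good sequence on $G-\{w_1\}$ that recolors $v$ at most $4$ times. Now $w_1$'s two neighbors $v$ and $x_1$ are recolored at most $4+9=13$ times in total, and the Key Lemma (with $|L(w_1)|-d_G(w_1)-1=1$) finishes with at most $\lceil 13/1\rceil+1=14$ recolorings of $w_1$. The hypothesis on $x_1$ is used at this final step, not at the start; putting $w_1$ first is what makes your argument hard.
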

\begin{proof}
See Figure~\ref{lem2,3-fig}.
By Lemma~\ref{lem2}, we extend $\sigma'$ to a 14-good $L$-recoloring sequence for
$G-\{w_1\}$ that recolors $v$ at most 4 times.  After this, we extend the
14-good $L$-recoloring sequence to $w_1$ by the Key Lemma, since the number of
times its neighborhood is recolored is at most $4+9$.
\end{proof}

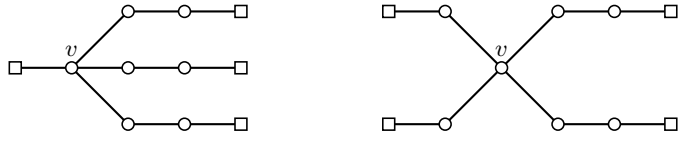
\begin{figure}[!h]
\centering
\begin{tikzpicture}[thick, scale=.75]
\tikzstyle{uStyle}=[shape = circle, minimum size = 4.5pt, inner sep = 0pt,
outer sep = 0pt, draw, fill=white, semithick]
\tikzstyle{sStyle}=[shape = rectangle, minimum size = 4.5pt, inner sep = 0pt,
outer sep = 0pt, draw, fill=white, semithick]
\tikzstyle{lStyle}=[shape = circle, minimum size = 4.5pt, inner sep = 0pt,
outer sep = 0pt, draw=none, fill=none]
\tikzset{every node/.style=uStyle}
\def\off{.4}

\draw (-1,0) node[sStyle] {} -- (0,0) 
node (v) {} -- (1,1) node {} -- (2,1) node {} -- (3,1) node[sStyle] {}
(v) -- (1,0) node {} -- (2,0) node {} -- (3,0) node[sStyle] {}
(v) -- (1,-1) node {} -- (2,-1) node {} -- (3,-1) node[sStyle] {};
\draw (v) ++ (0,.3) node[lStyle] {\footnotesize{$v$}};

\begin{scope}[xshift=3in]

\draw (-2,1) node[sStyle] {} -- (-1,1) node {} -- (0,0)
node (v) {} -- (1,1) node {} -- (2,1) node {} -- (3,1) node[sStyle] {};
\draw (-2,-1) node[sStyle] {} -- (-1,-1) node {} -- 
(v) -- (1,-1) node {} -- (2,-1) node {} -- (3,-1) node[sStyle] {};
\draw (v) ++ (0,.3) node[lStyle] {\footnotesize{$v$}};
\end{scope}

\end{tikzpicture}
\caption{%
A 4-vertex with 6 nearby 2-vertices;
two cases in Lemma~\ref{lem4}.\label{lem4-fig}}
\end{figure}

\begin{lem}
\label{lem4}
$G$ does not contain any 3-vertex with 4 or more nearby 2-vertices,
and $G$ does not contain any 4-vertex with 6 or more nearby 2-vertices.
\end{lem}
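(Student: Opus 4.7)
My plan is to assume the offending configuration occurs in $G$ and construct a 14-good $L$-recoloring sequence on $G$, contradicting the minimality of $G$. Since $G$ has no 3-thread by Lemma~\ref{lem2}, every thread at a $3^+$-vertex contributes at most two nearby 2-vertices. Writing the thread-length multiset at $v$, where a length-$\ell$ thread contributes $\ell-1$ nearby 2-vertices, the possible patterns at an offending 3-vertex are $(3,2,2),(3,3,1),(3,3,2),(3,3,3)$, and at an offending 4-vertex are $(3,3,3,1),(3,3,2,2),(3,3,3,2),(3,3,3,3)$. The decisive split is whether the pattern contains a $1$, that is, whether $v$ has a $3^+$-neighbor adjacent to it by a direct edge.

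In the cases without a $1$, every neighbor of $v$ is a 2-vertex, and at least one, say $u_1$, lies on a 2-thread $vu_1u_2x_1$. For the 3-vertex subcases I would apply Lemma~\ref{lem3} with $u_1$ playing the role of its $w_1$ and $u_2$ playing the role of its $x_1$. Its conditional hypothesis $u_2\le 9$ is met by building $\sigma'$ on $G':=G-\{v,u_1,w_1,y_1\}$ (where $w_1,y_1$ are the other two 2-neighbors of $v$) as a Key-Lemma extension of a 14-good sequence on $G'-u_2$, which is guaranteed by minimality; in $G'$ the vertex $u_2$ is a pendant at $x_1$, so the Key Lemma bounds it by $\lceil 14/2\rceil+1=8$. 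For the 4-vertex subcases, delete two 2-neighbors $u_1,w_1$ of $v$ that lie on 2-threads; in $H:=G-\{u_1,w_1\}$, the vertex $v$ is the middle of a 3-thread whose other interior vertices are the remaining two 2-neighbors of $v$. An analogous pre-build (adding $u_2,w_2$ as pendants to $x_1,x_2$ via the Key Lemma) gives $u_2,w_2\le 8$; then Lemma~\ref{lem2} yields a 14-good sequence on $H$ in which $v$ is recolored at most $4$ times, and the Key Lemma reinserts $u_1,w_1$ with count at most $\lceil(4+8)/1\rceil+1=13$ each.

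The patterns $(3,3,1)$ and $(3,3,3,1)$ are the main obstacle: the direct $3^+$-neighbor $x$ cannot be deleted and then reinserted, since list size $4$ leaves the Key Lemma no slack for a $3^+$-vertex. My plan here is to never delete $x$. Start from a 14-good sequence on $G_0:=G-(\{v\}\cup\{\text{nearby 2-vertices of }v\})$ by minimality, add $v$ back by the Key Lemma as a pendant at $x$ (so $v\le\lceil 14/2\rceil+1=8$), and then absorb the 2-threads at $v$ one at a time via Lemma~\ref{lem1}---twice for $(3,3,1)$ and three times for $(3,3,3,1)$---each invocation treating $v$ as the endpoint $v_4$ of a 2-thread. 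The hypothesis $v\le 11$ of Lemma~\ref{lem1} holds because $v$'s count starts at $8$ and is unchanged by the extensions, which only recolor the two interior 2-vertices of the thread in question. Each application bounds the middle interior 2-vertex by $8+3=11$ and the outer interior 2-vertex by $14$, so after all applications we obtain a 14-good $L$-recoloring sequence on $G$. The key idea is to accept the weaker bound $v\le 8$ (rather than the $v\le 4$ that would follow from Lemma~\ref{lem2} with $v$ in the middle of a 3-thread at the cost of deleting $x$) and to compensate by spreading the work across the 2-threads via Lemma~\ref{lem1}.
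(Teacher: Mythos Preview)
Your argument is correct, but the paper organizes the case split differently and thereby avoids the ``pre-build'' maneuvers entirely.  The paper's two cases are determined by the number of 2-threads at $v$, not by whether a 0-thread is present.  Case~1 handles any $v$ with at least $d(v)-1$ incident 2-threads: delete the interiors of $d(v)-1$ of them so that $v$ becomes a leaf, apply the Key Lemma to get $v\le 8$, then apply Lemma~\ref{lem1} once per deleted 2-thread.  Case~2 handles the only remaining patterns, namely $(2,1,1)$ and $(2,2,1,1)$: delete the 2-thread interiors so that $v$ becomes the middle of the 3-thread formed by the two 1-threads together with $v$, apply Lemma~\ref{lem2} to get $v\le 4$, and finish with Lemma~\ref{lem1}.

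In particular, what you call the ``main obstacle'' patterns (a 0-thread present) fall squarely under the paper's Case~1, and your treatment of them is identical to the paper's.  The genuine divergence is in your no-0-thread subcases: you route the 3-vertex through Lemma~\ref{lem3} and the 4-vertex through a Lemma~\ref{lem2} plus Key Lemma hybrid, each requiring you to first strip off $u_2$ (and $w_2$) and reinsert them as pendants to force the bound $\le 8$.  This works, but the paper sidesteps it by noting that every such pattern either already has $d(v)-1$ 2-threads (so Case~1 applies with no pre-build) or is exactly $(2,1,1)$ or $(2,2,1,1)$ (so Case~2 applies).  Your use of Lemma~\ref{lem3} is a legitimate shortcut given that it is already available, and your direct Key Lemma reinsertion of $u_1,w_1$ (instead of Lemma~\ref{lem1}) is a valid alternative; but the paper's split makes the two halves of the argument nearly symmetric and eliminates the auxiliary pendant bounds.
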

\begin{proof}
First suppose that $G$ contains a 3-vertex or 4-vertex $v$ that has
$d(v)-1$ incident 2-threads.  
For the case when $d(v)=4$, see Figure~\ref{lem4-fig} (left).
Form $G'$ from $G$ by deleting the interior
vertices of the $d(v)-1$ incident 2-threads.  And let $G'':=G'-v$.
By minimality, there exists a 14-good $L$-recoloring sequence $\sigma''$ for $G''$
that transforms $\alpha_{\uphr G''}$ to $\beta_{\uphr G''}$.
By the Key Lemma, we can extend $\sigma''$ to a 14-good $L$-recoloring sequence
$\sigma'$ for $G'$ that recolors $v$ at most $1+14/2=8$ times.
By applying Lemma~\ref{lem1} to each 2-thread
incident to $v$, we can extend $\sigma'$ to a 14-good $L$-recoloring sequence
$\sigma$ for $G$ that transforms $\alpha$ to $\beta$.

Suppose instead that $G$ contains a 3-vertex or 4-vertex $v$ that has 2 incident
1-threads and $d(v)-2$ incident 2-threads.
See Figure~\ref{lem4-fig} (right).
Form $G'$ from $G$ by deleting the interior vertices of the 2-threads incident
to $v$.  Form $G''$ from $G'$ by deleting $v$ and the interior vertices of the
1-threads incident to $v$.  Now the argument is very similar to that in the
previous paragraph.  By minimality, there exists a 14-good $L$-recoloring sequence
$\sigma''$ for $G''$ that transforms $\alpha_{\uphr G''}$ to $\beta_{\uphr
G''}$.  By Lemma~\ref{lem2}, we extend it to a 14-good $L$-recoloring sequence
$\sigma'$ for $G'$ that transforms $\alpha_{\uphr G'}$ to $\beta_{\uphr G'}$ and
recolors $v$ at most 4 times.  By applying Lemma~\ref{lem1} for each 2-thread
incident to $v$, we extend $\sigma'$ to a 14-good $L$-recoloring
sequence $\sigma$ for $G$ that transforms $\alpha$ to $\beta$.

Since $G$ has no 3-threads, by Lemma~\ref{lem2}, all possible cases are handled
above.
\end{proof}

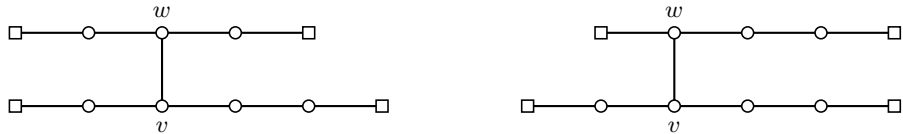
\begin{figure}[!h]
\centering
\begin{tikzpicture}[thick, scale=.975]
\tikzstyle{uStyle}=[shape = circle, minimum size = 4.5pt, inner sep = 0pt,
outer sep = 0pt, draw, fill=white, semithick]
\tikzstyle{sStyle}=[shape = rectangle, minimum size = 4.5pt, inner sep = 0pt,
outer sep = 0pt, draw, fill=white, semithick]
\tikzstyle{lStyle}=[shape = circle, minimum size = 4.5pt, inner sep = 0pt,
outer sep = 0pt, draw=none, fill=none]
\tikzset{every node/.style=uStyle}
\def\off{.285}

\draw (-2,0) node[sStyle] {} -- (-1,0) node {} -- (0,0) node (v) {} -- (1,0)
node {} -- (2,0) node {} -- (3,0) node[sStyle] {}
(v) -- (0,1) node (w) {} (-2,1) node[sStyle] {} -- (-1,1) node {} -- (w) --
(1,1) node {} -- (2,1) node[sStyle] {};
\draw (v) ++ (0,-\off) node[lStyle] {\footnotesize{$v$}};
\draw (w) ++ (0,\off) node[lStyle] {\footnotesize{$w$}};

\begin{scope}[xshift=2.75in]

\draw (-2,0) node[sStyle] {} -- (-1,0) node {} -- (0,0) node (v) {} -- (1,0)
node {} -- (2,0) node {} -- (3,0) node[sStyle] {}
(v) -- (0,1) node (w) {} (-1,1) node[sStyle] {}  -- (w) --
(1,1) node {} -- (2,1) node {} -- (3,1) node[sStyle] {};
\draw (v) ++ (0,-\off) node[lStyle] {\footnotesize{$v$}};
\draw (w) ++ (0,\off) node[lStyle] {\footnotesize{$w$}};

\end{scope}
\end{tikzpicture}
\caption{%
A $3_{2,1,0}$-vertex $v$ adjacent to a $3_{1,1,0}$-vertex $w$ (left)
and a $3_{2,1,0}$-vertex $v$ adjacent to a $3_{2,0,0}$-vertex $w$ (right);
two cases of Lemma~\ref{lem5}.\label{lem5-fig}}

\end{figure}

\begin{lem}
\label{lem5}
No $3_{2,1,0}$-vertex is adjacent to 
a $3_{1,1,0}$-vertex or a $3_{2,0,0}$-vertex or a $3_{2,1,0}$-vertex.
\end{lem}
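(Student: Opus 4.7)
I would argue by contradiction. Let $G$ be a minimum counterexample and suppose $G$ contains a $3_{2,1,0}$-vertex $v$ whose 0-thread joins it to a vertex $w$ of one of the three listed types. Let $u_1,u_2$ be the interior 2-vertices of $v$'s 2-thread (with weak neighbor $x_1$), let $u_3$ be the interior 2-vertex of $v$'s 1-thread (with weak neighbor $x_2$), and let $U_w$ denote the interior 2-vertices of the threads at $w$.

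My plan exploits that once $w$ is deleted, $v$ becomes a 2-vertex, so the path $u_2-u_1-v-u_3-x_2$ is a proper 3-thread in $G_1:=G-\{w\}-U_w$ with $v$ as its middle. Applying the minimality of $G$ to $G_1-\{u_1,v,u_3\}$ yields a 14-good $L$-recoloring sequence on that smaller graph, which Lemma~\ref{lem2} extends to a 14-good sequence for $G_1$ in which $v$ is recolored at most 4 times. I would then reinsert $w$ and the vertices of $U_w$. In Cases A and B, when $w$ is first reinserted its only current neighbor is $v$, so the Key Lemma gives $w$ recolored at most $\lceil 4/2\rceil+1=3$ times. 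The remaining interior 2-vertices of $w$'s threads are added next: for any 2-thread at $w$, Lemma~\ref{lem1} applies (valid since $w$ is recolored at most $3\le 11$ times), and for 1-threads at $w$, I would use further applications of Lemma~\ref{lem2} to 3-threads of the form $u_3-v-w-z_i-y_i$ inside appropriate subgraphs where the three interior vertices all have degree 2. Finally, the 2-thread at $v$ is reinstated by Lemma~\ref{lem1}, valid because $v$ is recolored at most $4\le 11$ times.

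The hard part will be Case C, where $w$ is a $3_{2,0,0}$-vertex with a 0-thread neighbor $y_2\in G_1$: when $w$ is reinserted it already has degree 2 (neighbors $v$ and $y_2$), and the Key Lemma only gives the useless bound that $w$ is recolored at most $c(v)+c(y_2)+1\le 4+14+1=19$ times. To handle Case C, I would instead work in the subgraph $G-\{u_1,y_2\}$, where both $v$ and $w$ again have degree 2; after removing the interior $\{u_3,v,w\}$ and invoking minimality, Lemma~\ref{lem2} applied to the 3-thread $x_2-u_3-v-w-z_1$ (middle $v$) gives $v$ recolored at most 4 times, and a Lemma~\ref{lem1}-style bookkeeping argument, propagating this bound along the edge $vw$, should yield $w$ recolored at most 11 times as well. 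This lets Lemma~\ref{lem1} apply to $w$'s 2-thread $w-z_1-z_2-y_1$. The vertices $u_1$ and $y_2$ are finally reintroduced, $u_1$ via the Key Lemma (using that $v$ and $u_2$ now have low recoloring counts) and $y_2$ by verifying that the existing sequence can be extended consistently at $y_2$. The delicate point is the propagation argument bounding $w$ by 11, which must carefully exploit the list-size-4 constraint and the sparsity of $v$'s recolorings across the edge $vw$.
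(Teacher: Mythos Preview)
Your approach has a real gap when you reinsert the 1-thread interior vertices at $w$. After building the sequence on $G_1$ with $v$ recolored at most $4$ times and adding $w$ via the Key Lemma (at most $3$ recolorings), a direct Key Lemma application to a 1-thread vertex $z_i$ gives at most $\lceil(3+14)/1\rceil+1=18>14$ recolorings. Your proposed fix, applying Lemma~\ref{lem2} to the path $u_3vwz_iy_i$ ``inside appropriate subgraphs,'' does not work: any such subgraph must omit $u_1$ (so that $v$ has degree $2$) and the other $z_j$ (so that $w$ has degree $2$), and Lemma~\ref{lem2} then \emph{rebuilds} the recolorings of $v$ and $w$ from scratch, in general inconsistent with the sequence you already fixed on $G_1$; there is then no way to glue $u_1$ back. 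Your closing line about ``reinstating the 2-thread at $v$'' reflects the same confusion: $u_1,u_2$ were never removed from $G_1$. Case~C has the analogous defect. In your Lemma~\ref{lem2} call on $x_2u_3vwz_1$, the bound on $w$ (playing the role of $v_4$) is driven by $z_1$ (playing $v_5$), which is only bounded by $14$; nothing in that proof lets you ``propagate along $vw$'' to obtain $w\le 11$. And reintroducing $u_1$ via the Key Lemma fails, since its neighbors $u_2,v$ are recolored $14+4$ times in total, giving $u_1$ up to $\lceil 18/1\rceil+1=19$ recolorings.

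The paper avoids all of this by peeling in the opposite order: first the 2-thread at $v$, then $v$ together with its 1-thread, and only then $w$ with its threads. Rebuilding, one first bounds $w$ by $4$ (via Lemma~\ref{lem2}, when $w$ is a $3_{1,1,0}$- or $3_{2,1,0}$-vertex) or by $8$ (Key Lemma then Lemma~\ref{lem1}, when $w$ is a $3_{2,0,0}$-vertex); then Lemma~\ref{lem1} on the 2-thread $x_2u_3vw$ gives $v\le w+3\le 11$; finally Lemma~\ref{lem1} on $v$'s 2-thread gives its interior vertex recolored at most $11+3=14$ times. The bottleneck is the 2-thread at $v$, which is why it must be added last, after $v$'s count has been controlled through $w$.
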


\begin{proof}
Assume, to the contrary, that $G$ contains a $3_{2,1,0}$-vertex $v$ with a
3-neighbor $w$ that is a $3_{1,1,0}$-vertex or a $3_{2,0,0}$-vertex or a
$3_{2,1,0}$-vertex.  Figure~\ref{lem5-fig} shows the first two of these cases.
Form $G'$ from $G$ by deleting the interior vertices of a
2-thread incident to $v$.  Form $G''$ from $G'$ by deleting $v$ and the
interior vertex of an incident 1-thread.  Form $G'''$ from $G''$ by deleting
$w$ and all interior vertices of its incident threads.  By minimality, $G'''$
has a 14-good $L$-recoloring sequence $\sigma'''$ that transforms
$\alpha_{\uphr G'''}$ to $\beta_{\uphr G'''}$.  

If $w$ is a $3_{2,1,0}$-vertex or a $3_{1,1,0}$-vertex, then by
Lemma~\ref{lem2}, we extend $\sigma'''$ to a 14-good $L$-recoloring sequence
$\sigma''$ for $G''$ that recolors $w$ at most 4 times and transforms
$\alpha_{\uphr G''}$ to $\beta_{\uphr G''}$.  If instead $w$ is a
$3_{2,0,0}$-vertex, then we first extend to $w$ by the Key Lemma, recoloring
$w$ at most $14/2+1=8$ times, and then extend to its incident 2-thread by
Lemma~\ref{lem1}.  Again by Lemma~\ref{lem1}, we extend $\sigma''$ to a 14-good
$L$-recoloring sequence $\sigma'$ for $G'$ that recolors $v$ at most
$8+3=11$ times and transforms $\alpha_{\uphr G'}$ to $\beta_{\uphr
G'}$.  Now by Lemma~\ref{lem1}, we extend $\sigma'$ to a 14-good $L$-recoloring
sequence $\sigma$ for $G$ that transforms $\alpha$ to $\beta$.
Note that $\sigma$ recolors the neighbor of $v$ on its incident 2-thread at most
$11+3=14$ times.
\end{proof}

\begin{figure}[!h]
\centering
\begin{tikzpicture}[thick, scale=.625, yscale=.95]
\tikzstyle{uStyle}=[shape = circle, minimum size = 4.5pt, inner sep = 0pt,
outer sep = 0pt, draw, fill=white, semithick]
\tikzstyle{sStyle}=[shape = rectangle, minimum size = 4.5pt, inner sep = 0pt,
outer sep = 0pt, draw, fill=white, semithick]
\tikzstyle{lStyle}=[shape = circle, minimum size = 4.5pt, inner sep = 0pt,
outer sep = 0pt, draw=none, fill=none]
\tikzset{every node/.style=uStyle}
\def\off{.405}

\draw (-2,2) node[sStyle] {} -- (-1,1) node {} -- (0,0) node (v) {} 
(-2,-2) node[sStyle] {} -- (-1,-1) node {} -- (v) -- (1.5,0) node (x) {} -- (3,0)
node (w) {} 
(5,2) node[sStyle] {} -- (4,1) node {} -- (w)
(5,-2) node[sStyle] {} -- (4,-1) node {} -- (w);
\draw (v) ++ (0,\off) node[lStyle] {\footnotesize{$v$}};
\draw (w) ++ (0,\off) node[lStyle] {\footnotesize{$w$}};
\draw (x) ++ (0,\off) node[lStyle] {\footnotesize{$x$}};

\begin{scope}[xshift=3.75in]

\draw (-2,2) node[sStyle] {} -- (-1,1) node {} -- (0,0) node (v) {} 
(-2,-2) node[sStyle] {} -- (-1,-1) node {} -- (v) -- (1.5,0) node (x) {} -- (3,0)
node (w) {} 
(4,1) node[sStyle] {} -- (w)
(6,-2) node[sStyle] {} -- (5,-1.33) node {} -- (4,-.67) node {} -- (w);
\draw (v) ++ (0,\off) node[lStyle] {\footnotesize{$v$}};
\draw (w) ++ (0,\off) node[lStyle] {\footnotesize{$w$}};
\draw (x) ++ (0,\off) node[lStyle] {\footnotesize{$x$}};

\end{scope}
\end{tikzpicture}

\caption{A $3_{1,1,1}$-vertex $v$ weakly adjacent to a $3_{1,1,1}$-vertex $w$ (left)
and a $3_{1,1,1}$-vertex $v$ weakly adjacent to a $3_{2,1,0}$-vertex $w$
(right); the two cases of Lemma~\ref{lem6}\label{lem6-fig}.}
\end{figure}
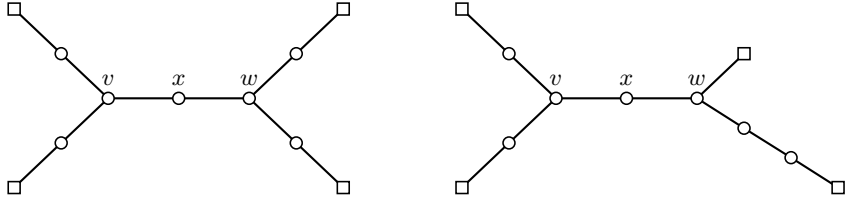

\begin{lem}
\label{lem6}
Each $3_{1,1,1}$-vertex has no weak 3-neighbor that is a $3_{2,1,0}$-vertex  
and no weak 3-neighbor that is a $3_{1,1,1}$-vertex.
\end{lem}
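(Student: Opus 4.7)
The plan is to prove the lemma by contradiction, along the lines of Lemmas \ref{lem4}--\ref{lem5}. Write $vxw$ for the shared 1-thread between the $3_{1,1,1}$-vertex $v$ and its weak 3-neighbor $w$, and write $x_1y_1v$ and $x_2y_2v$ for the other two 1-threads at $v$. Let $S$ consist of $v$, $w$, and the interior 2-vertices of every thread at $v$ or $w$, and set $G_0 := G - S$. Since $G_0$ is a proper subgraph of $G$, by minimality it admits a 14-good $L$-recoloring sequence $\sigma_0$, and the task is to extend $\sigma_0$ to a 14-good sequence for $G$ by reintroducing the vertices of $S$ in a carefully chosen order. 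The delicate vertex is $x$: since $|L(x)| - d_G(x) - 1 = 1$, the Key Lemma bound on $x$ is essentially $t + 1$, where $t$ is the total number of times $v$ and $w$ are recolored before $x$ is put back, so each of $v$ and $w$ must enter with a small recoloring count.

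\textbf{Case 1 ($w$ is a $3_{1,1,1}$-vertex).} Let $wz_1w_1$ and $wz_2w_2$ be the other two 1-threads at $w$. I would extend $\sigma_0$ symmetrically in three stages. First, add $y_1, v, y_2$: since $x$ is still absent, $v$ has degree $2$ in the resulting subgraph, so $x_1y_1vy_2x_2$ is a 3-thread with $v$ as its middle vertex, and Lemma \ref{lem2} gives an extension that recolors $v$ at most $4$ times. Second, symmetrically add $z_1, w, z_2$ via the 3-thread $w_1z_1wz_2w_2$; Lemma \ref{lem2} again bounds $w$ by $4$ recolorings. Third, add $x$ via the Key Lemma: the neighborhood $\{v,w\}$ has been recolored at most $4+4 = 8$ times, so $x$ is recolored at most $\lceil 8/1 \rceil + 1 = 9$ times, which fits within the 14-good budget.

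\textbf{Case 2 ($w$ is a $3_{2,1,0}$-vertex).} Let $u$ be the $3^+$-vertex directly adjacent to $w$ via the 0-thread, and let $wz_1z_2w'$ be $w$'s 2-thread. Now the symmetric plan from Case 1 breaks: $w$ cannot serve as the middle vertex of a 3-thread, because its third neighbor $u$ is an arbitrary $3^+$-vertex and cannot be reduced to degree $2$ by the deletions in $S$. The remedy is to reorder and insert $w$ first, while $x$ and $z_1$ are still absent, so that $w$ enters with degree $1$. The stages are: (A) add $w$ by the Key Lemma, giving $w$ recolored at most $\lceil 14/2 \rceil + 1 = 8$ times; (B) add $z_1, z_2$ by Lemma \ref{lem1} applied to the 2-thread $w'z_2z_1w$ with $v_4 = w$, which is legal because $s = 8 \le 11$ and which leaves $w$'s recoloring count unchanged; (C) add $y_1, v, y_2$ via the 3-thread $x_1y_1vy_2x_2$ and Lemma \ref{lem2}, so $v$ is recolored at most $4$ times; (D) add $x$ via the Key Lemma, where now $\{v,w\}$ has been recolored at most $4 + 8 = 12$ times, so $x$ is recolored at most $13 \le 14$ times.

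The main obstacle is exactly the reordering in Case 2. A direct imitation of Case 1 would add $z_1, w, z_2$ as a block via Lemma \ref{lem2} on the 3-thread $uwz_1z_2w'$; but then only the middle vertex $z_1$ is controlled, and $w$ could be recolored up to $14$ times, pushing the bound on $x$ to $4 + 14 + 1 = 19$. Inserting $w$ as a leaf via the Key Lemma, before building its 2-thread, keeps $t_w \le 8$; this both meets the $s \le 11$ hypothesis of Lemma \ref{lem1} and, together with $t_v \le 4$, leaves enough budget for $x$.
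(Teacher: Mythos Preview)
Your proof is correct and follows essentially the same approach as the paper: in both cases you delete $x$, $v$, $w$, and all nearby 2-vertices, then reinsert $v$ via Lemma~\ref{lem2} (so $v$ is recolored at most 4 times), reinsert $w$ either via Lemma~\ref{lem2} (Case~1, giving 4) or via the Key Lemma as a degree-1 vertex followed by Lemma~\ref{lem1} for its 2-thread (Case~2, giving 8), and finally reinsert $x$ via the Key Lemma. Your ordering in Case~2 differs cosmetically from the paper's, but the operations on $v$ and on $w$ touch disjoint vertex sets, so the order is immaterial; your discussion of why the naive symmetric plan fails in Case~2 is a nice addition that the paper leaves implicit.
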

\begin{proof}
First suppose that $v$ and $w$ are both $3_{1,1,1}$-vertices and $v$ is a weak
neighbor of $w$; let $x$ be their common neighbor.  See Figure~\ref{lem6-fig}
(left).  Let $G':=G-x$ and form $G''$
from $G$ by deleting $v$, $w$, and all interior vertices of incident 1-threads.
By minimality, $G''$ has a 14-good $L$-recoloring sequence that transforms
$\alpha_{\uphr G''}$ to $\beta_{\uphr G''}$.  By applying Lemma~\ref{lem2}
twice, we extend $\sigma''$ to a 14-good $L$-recoloring sequence $\sigma'$ that
recolors each of $v$ and $w$ at most 4 times and transforms $\alpha_{\uphr G'}$
to $\beta_{\uphr G'}$.  By the Key Lemma, we extend $\sigma'$ to a 14-good
$L$-recoloring sequence for $G$ that transforms $\alpha$ to $\beta$.
The number of times that $\sigma$ recolors $x$ is at most $4+4+1$.

Now suppose instead that $v$ is a $3_{1,1,1}$-vertex and $w$ is a
$3_{2,1,0}$-vertex and that $v$ and $w$ have a common 2-neighbor, $x$.
See Figure~\ref{lem6-fig} (right).
The proof is nearly identical.  The difference is that to extend a 14-good
recoloring sequence from $G''$ to $G'$, we use Lemma~\ref{lem2} for $v$, but for
$w$ we first use the Key Lemma and then use Lemma~\ref{lem1}; note that $w$ is
recolored at most $1+14/2=8$ times.  Again, we can extend to a 14-good
$L$-recoloring sequence for $G$ that transforms $\alpha$ to $\beta$.  Now the
number of times that $\sigma$ recolors $x$ is at most $4+8+1=13$.
\end{proof}

\subsection{Discharging}
\label{4list-discharging-sec}

\begin{lem}
If $G$ is a graph with $\delta(G)\ge 2$ and $\mad(G)<22/9$, then $G$ contains
one of the configurations forbidden, by Lemmas~\ref{lem1}--\ref{lem6}, from
appearing in a minimal counterexample to Theorem~\ref{thm3}.  Thus,
Theorem~\ref{thm3} is true.
\end{lem}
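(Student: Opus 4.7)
The plan is a discharging argument. Assign each vertex $v$ the initial charge $\mu(v) := d(v) - 22/9$; the hypothesis $\mad(G) < 22/9$ forces $\sum_v \mu(v) < 0$, so it suffices to redistribute charge and show every vertex finishes with $\ch^*(v) \ge 0$. The natural first rule is \textbf{R1:} each 2-vertex takes $2/9$ from each endpoint of its maximal thread. Then every 2-vertex ends at $0$, and a 3-vertex starts at $5/9$ and loses $2/9$ per nearby 2-vertex, so by Lemma~\ref{lem4} it loses at most $6/9$; the bound is attained only for the types $3_{1,1,1}$ and $3_{2,1,0}$, which end at deficit $-1/9$.

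To rescue them I would add \textbf{R2:} each $3_{2,1,0}$-vertex takes $1/9$ from its $3^+$-neighbor across its 0-thread; and \textbf{R3:} each $3_{1,1,1}$-vertex takes $1/27$ from each of its three weak $3^+$-neighbors. These rules bring the two problematic types to exactly $0$. Feasibility of R2 relies on Lemma~\ref{lem5}, which --- together with the fact that a $3_{1,1,1}$-vertex has no 0-thread --- forces the donor to be a $3_{0,0,0}$, a $3_{1,0,0}$, or a $4^+$-vertex. Feasibility of R3 relies on Lemma~\ref{lem6}, which forces each weak 3-vertex R3 donor to be a $3_{1,0,0}$ or a $3_{1,1,0}$.

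For the verification, let $t_i$ denote the number of $i$-threads incident to $v$; then the total outflow from $v$ is bounded by
\[
\frac{2(t_1 + 2 t_2)}{9} + \frac{t_0}{9} + \frac{t_1}{27} = \frac{3 t_0 + 7 t_1 + 12 t_2}{27}.
\]
For each 3-vertex type surviving Lemma~\ref{lem4}, I would check $\ch^*(v) \ge 0$ directly; the tightest cases are $3_{1,0,0}$ (which may donate up to $2/9$ via R2 and $1/27$ via R3, leaving $2/27$) and $3_{1,1,0}$ (which by Lemma~\ref{lem5} cannot donate via R2, so loses only $2/27$ via R3 and ends at $1/27$). For 4-vertices, Lemma~\ref{lem4} gives $t_1 + 2 t_2 \le 5$ with $t_0 + t_1 + t_2 = 4$, so $3 t_0 + 7 t_1 + 12 t_2 \le 34$ and $\ch^*(v) \ge 8/27$. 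For $d \ge 5$, the unconstrained maximum of that linear form is $12 d$, yielding $\ch^*(v) \ge (15 d - 66)/27 > 0$. The main obstacle is calibrating R2 and R3 so that donor 3-vertices with surplus as small as $1/9$ still end nonnegative simultaneously; this is precisely where the exclusions in Lemmas~\ref{lem5} and~\ref{lem6} are essential.
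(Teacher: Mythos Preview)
Your argument is correct and follows essentially the same discharging approach as the paper: initial charge $d(v)-22/9$ (equivalently $d(v)$ with threshold $22/9$), rule R1 sending $2/9$ per nearby 2-vertex, and a secondary transfer to rescue the two deficit types $3_{2,1,0}$ and $3_{1,1,1}$, with Lemmas~\ref{lem5} and~\ref{lem6} controlling exactly which 3-vertices can be donors. The only cosmetic difference is that the paper phrases the secondary transfer as a push (surplus vertices send $1/9$ to 3-neighbors and $1/18$ to weak 3-neighbors) whereas you phrase it as a pull ($3_{2,1,0}$ takes $1/9$, $3_{1,1,1}$ takes $1/27$); both calibrations clear the same bottleneck cases.
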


\begin{proof}
Each vertex $v$ begins with charge $d(v)$ and we discharge so that each vertex
ends with charge at least $2+4/9$.  We use the following three discharging rules.

\begin{enumerate}
\item[(R1)] Each $3^+$-vertex sends $2/9$ to each nearby 2-vertex.
\item[(R2)] Each $4^+$-vertex and $3_{0,0,0}$-vertex and $3_{1,0,0}$-vertex
sends $1/9$ to each
3-neighbor and sends $1/18$ to each weak 3-neighbor with a common 2-neighbor.
\item[(R3)] Each $3_{1,1,0}$-vertex sends $1/18$ to each weak 3-neighbor with a
common 2-neighbor.
\end{enumerate}

Now we show that each vertex finishes with charge at least $2+4/9$.  
Recall, from Lemma~\ref{lem2}, that $G$ has no 3-threads.  
And recall, from Lemma~\ref{lem0}, that $\delta(G)\ge
2$.  If $d(v)=2$, then $\ch^*(v)=2+2(2/9)=2+4/9$.  If $d(v)\ge 5$, then
$\ch^*(v)\ge d(v)-2(2/9)d(v) = 5d(v)/9\ge 25/9 > 2+4/9$.  If $d(v)=4$, 
then, by Lemma~\ref{lem4}, $v$ has at most 5 nearby 2-vertices.
So $v$ gives away at most $5(2/9)$ by (R1) and at most $4(1/18)$ by (R2).
Now $\ch^*(v)\ge 4-5(2/9)-4(1/18)=36/9-12/9 = 24/9=2+6/9$.  Thus, 
by Lemma~\ref{lem4},
we assume that $v$ is a $3_{a,b,c}$-vertex, where $a+b+c\le 3$. 

If $v$ is a $3_{0,0,0}$-vertex, then $\ch^*(v)\ge 3-3(1/9) = 2+6/9$.

If $v$ is a $3_{1,0,0}$-vertex, then $\ch^*(v)\ge 3-2/9-2(1/9)-1/18=2+1/2$.

If $v$ is a $3_{1,1,0}$-vertex, then $\ch^*(v)\ge 3-2(2/9)-2(1/18)=2+4/9$.

If $v$ is a $3_{2,0,0}$-vertex, then $\ch^*(v)\ge 3-2(2/9)=2+5/9$.

If $v$ is a $3_{2,1,0}$-vertex, then $\ch^*(v)\ge 3-3(2/9)+1/9=2+4/9$, since
$v$ receives at least $1/9$ from its $3^+$-neighbor by (R2); this
is because, by Lemma~\ref{lem5},  the $3^+$-neighbor of $v$ cannot be a
$3_{1,1,0}$-vertex or a $3_{2,0,0}$-vertex or a $3_{2,1,0}$-vertex.

If $v$ is a $3_{1,1,1}$-vertex, then $\ch^*(v)\ge 3-3(2/9)+3(1/18) = 2+1/2$,
since $v$ receives at least $1/18$ from each weak $3^+$-neighbor by rule (R2)
or (R3); this is because, by Lemma~\ref{lem6}, a $3_{1,1,1}$-vertex
has no weak neighbor that is a $3_{1,1,1}$-vertex or a
$3_{2,1,0}$-vertex (or a $3_{2,0,0}$-vertex).
\end{proof}

\section*{Acknowledgment}
Thanks to Ben Moore for helpful comments on an earlier version of this paper.
And thanks to an anonymous referee for catching various inaccuracies and minor omissions.

\bibliographystyle{habbrv}
{\tiny{\bibliography{GraphColoring}}}
\end{document}